\documentclass[a4paper]{amsart}
\usepackage[utf8]{inputenc}
\usepackage[T1]{fontenc}
\usepackage{lmodern}
\usepackage{amsmath, amsthm, amssymb, amsfonts}
\usepackage[all]{xy}
\usepackage{nicefrac,mathtools}
\usepackage[shortlabels, inline]{enumitem}
\usepackage{microtype}
\usepackage{hyperref}
\usepackage{graphicx}
\usepackage[all]{xy}
\usepackage{xcolor}
\usepackage{tikz-cd}

\usepackage{thmtools}

\numberwithin{equation}{section}
\theoremstyle{plain}
\newtheorem{theorem}[equation]{Theorem}

\newtheorem{lemma}[equation]{Lemma}

\newtheorem{proposition}[equation]{Proposition}

\newtheorem{corollary}[equation]{Corollary}

\theoremstyle{definition}
\newtheorem{definition}[equation]{Definition}

\theoremstyle{remark} 
 \newtheorem*{remark*}{Remark}
\newtheorem*{remarks*}{Remark}

\newcommand*{\Hilm}[1][H]{\mathcal #1}
\newcommand{\Bound}{\mathbb B}

\newcommand{\Comp}{\mathbb{K}}

\newcommand{\norm}[1]{\lvert\!\lvert #1\rvert\!\rvert}
\newcommand{\C}{\mathbb{C}}

\newcommand{\N}{\mathbb{N}}

\newcommand{\Mat}{\mathrm{M}}
\newcommand{\alg}{\textup{alg}}

\newcommand*{\nb}{\nobreakdash}
\newcommand*{\defeq}{\mathrel{\vcentcolon=}}

\title[Haagerup Property]{On the Haagerup property for partial crossed products}

\author{Md Amir Hossain} \email{mdamirhossain18@gmail.com}

\address{Theoretical Statistics and Mathematics Unit, Indian Statistical Institute, Delhi Centre, 7 S. J. S. Sansanwal Marg, New Delhi 110016, India.
}

\author{Chaitanya J. Kulkarni} \email{chaitanyakulkarni58@gmail.com}

\address{MIT Art, Design and Technology University, Pune, 412201, India.
}

\keywords{Haagerup property, Partial crossed product, Inductive limit}
\thanks{\emph{2020 Mathematics Subject classifications.}  46L05, 46L55}

\begin{document}
\begin{abstract}
Let $(A,G,\alpha)$ be a partial dynamical system and let 
$A\rtimes_{\alpha,r} G$ denote the associated reduced partial crossed product. 
In this article, we introduce the Haagerup property for partial actions of discrete groups on $C^*$-algebras. 
We prove that the partial crossed product 
$A\rtimes_{\alpha,r} G$ has the Haagerup property if and only if both 
$A$ and the partial action $\alpha$ have the Haagerup property. 
As a consequence, we obtain an equivalence between the Haagerup property of the partial crossed product and that of the underlying $C^*$-algebra and the acting group. 
We also show that the Haagerup property is preserved under inductive limits and apply this result to study the Haagerup property of inductive limits of partial crossed products.
\end{abstract}

\maketitle

\section{Introduction}

A discrete group \(G\) is said to have the Haagerup property if there exists a sequence of positive definite functions 
\(\{\phi_n\}_{n\in\mathbb{N}}\) on \(G\) that vanish at infinity and converge pointwise to \(1\). 
This notion was introduced by Haagerup~\cite{Haagerup-1979-An-example-non-nuclear-alg}, 
who proved that free groups possess this property. 
The Haagerup property is strictly weaker than amenability and is equivalent to Gromov's a-T-menability. 

The Haagerup property has since been extended to operator algebraic settings. 
Choda~\cite{Choda-1983-Group-factor-Haagerup-type} introduced an analogue for von Neumann algebras, 
and Dong~\cite{Dong-2012-Haagerup-prop-C_st-alg} formulated the Haagerup property for unital \(C^*\)\nb-algebras equipped with a faithful tracial state. 
Motivated by these developments, You~\cite{You-group-action-preserving-Haagerup-prop} and Meng~\cite{Meng-Haagerup-prop-for-C-st-alg} investigated the permanence of the Haagerup property under crossed product constructions arising from (global) group actions.

Partial actions of discrete groups on $C^*$-algebras were introduced independently by Exel~\cite{Exel-1994-Circle-action-Cst-alg} and McClanahan~\cite{McClanahan-1995-K-theory-Par-act}. 
They provide a flexible framework that generalizes global actions and give rise to rich classes of $C^*$-algebras. 
For instance, important examples such as Bunce--Deddens algebras and certain AF-algebras can be realized as partial crossed products 
(see, e.g.,~\cite{Exel1994-Bunce-Deddens-alg-as-crossed-product-by-partial-Automorphism, Exel-1995-AF-alg-and-partial-action}). 
Partial crossed products have since become an important tool in the study of $C^*$-algebraic dynamical systems.

The goal of this article is to study the Haagerup property in the setting of partial crossed products. 
Inspired by the work of You~\cite{You-group-action-preserving-Haagerup-prop} for global actions, 
we introduce the notion of the Haagerup property for a partial action. 
We show that if a partial action \(\alpha\) and the underlying \(C^*\)\nb-algebra \(A\) have the Haagerup property, then the reduced partial crossed product \(A\rtimes_{\alpha,r} G\) also has the Haagerup property. Our main result establishes a converse under a mild hypothesis on the partial action. 
More precisely, let \((A,G,\alpha)\) be a \emph{(unital)} partial dynamical system and let \(A\rtimes_{\alpha,r} G\) denote the associated reduced partial crossed product. 
We prove that the following conditions are equivalent:
\begin{enumerate}
	\item The partial crossed product \(A\rtimes_{\alpha,r} G\) has the Haagerup property.
	\item Both \(A\) and \(G\) have the Haagerup property.
	\item The partial action \(\alpha\) has the Haagerup property and \(A\) has the Haagerup property.
\end{enumerate}
This provides a characterization of the Haagerup property for partial crossed products in terms of the underlying algebraic and dynamical data.

In addition, motivated by results of Suzuki~\cite{Suzuki-Haagerup-prop-C_st-alg-Prop-T} on permanence properties of the Haagerup property (such as stability under direct sums and free products), we show that the Haagerup property is preserved under inductive limits of \(C^*\)\nb-algebras. 
As an application, we study the Haagerup property for inductive limits of partial crossed products.

\paragraph{\itshape Structure of the article.}
Section~\ref{prelim} contains preliminaries on partial actions, partial crossed products, and the Haagerup property for \(C^*\)\nb-algebras. 
In Section~\ref{sec-Haagerup-prop}, we introduce the Haagerup property for partial actions and prove the main characterization theorem for partial crossed products (Theorem~\ref{thm-equi-cond-Haagerup-prop}).
In Section~\ref{sec-ind-limit}, we establish the stability of the Haagerup property under inductive limits (Proposition~\ref{prop-ind-limit-Haagerup}) and apply this to inductive limits of partial crossed products (Corollary~\ref{coro-ind-lim-part-Haage}).

\section{Preliminaries}\label{prelim}

\paragraph{Notations} Throughout the article, \(G\) denotes a discrete group with identity element~\(e\). Let \(A\) be a unital \(C^*\)-algebra and \(\Bound(\Hilm)\) denotes the algebra of all bounded linear operators on a separable Hilbert space \(\Hilm\).

\subsection{Partial crossed products}

In this section, we recall the basic definitions and notions of partial actions and associated partial crossed products, from~\cite{Exel2017Book-Partial-action-Fell-bundle} and~\cite{McClanahan-1995-K-theory-Par-act}.

\begin{definition}[{\cite[Definition 11.4]{Exel2017Book-Partial-action-Fell-bundle}}]
	A \emph{partial action} of \(G\) on a \(C^*\)\nb-algebra \(A\) is a pair
	\(\alpha= (\{D_g\}_{g\in G}, \{\alpha_{g}\}_{g\in G})\), consisting of closed two-sided ideals \(\{D_g\}_{g\in G}\) of \(A\) and \(^*\)\nb-isomorphism \(\alpha_{g}\colon D_{g^{-1}} \to D_{g}\) for \(g\in G\) satisfying 
	\begin{enumerate}
		\item \(D_e= A\) and \(\alpha_{e} \colon D_e\to D_e\) is identity;
		\item\label{def-par-act-2}  \(\alpha_{g}\circ \alpha_{h} \subseteq \alpha_{gh}\) for \(g,h\in G\).
	\end{enumerate} 
\end{definition}
\noindent The composition \(\alpha_{g}\circ \alpha_{h}\) is not defined in the usual sense. Its domain is
\[
\{x\in D_{h^{-1}}: \alpha_h(x) \in D_{g^{-1}}\} = \alpha^{-1}_h(D_{g^{-1}}).
\]
 Condition~\eqref{def-par-act-2} in the above definition asserts that
\(\alpha_g(\alpha_h(x)) = \alpha_{gh}(x)\) for \(x\in \textup{dom}(\alpha_g\circ \alpha_h)\).
 In other words, \(\alpha_{gh}\) extends the composition \(\alpha_g\circ \alpha_h\).
We refer to \((A,G, \alpha)\) as a \(C^*\)\nb-algebraic partial dynamical system.

Let \((B, G, \eta)\) be a (global) \(C^*\)\nb-dynamical system and let \(A\) be a closed two-sided ideal of \(B\). Define \(D_{g} =\eta_{g}(A)\cap A\) and \(\alpha_{g}\colon D_{g^{-1}} \to D_{g}\) by the restriction of \(\eta_{g}\) on \(D_{g^{-1}}\). Then \(\alpha= \big(\{D_{g}\}_{g\in G}, \{\alpha_{g}\}_{g\in G} \big)\) is a partial action, called the \emph{restriction partial action}. We say \(\eta\) is a globalization of the partial action \(\alpha\) (see~\cite[Definition 28.1]{Exel2017Book-Partial-action-Fell-bundle}) if 
\[
 B =\overline{\sum_{g\in G}\eta_{g}(A)}.
\]
Note that a globalization of a given partial action need not exist.  However, if it exists, then it is unique (see~\cite[Proposition 28.2]{Exel2017Book-Partial-action-Fell-bundle}). A partial action is said to be \emph{unital} if each ideal \(D_g\) has a unit \(1_g\)  and every \(\alpha_{g}\) is unital. Note that \(1_g\) need not coincide with the unit of the algebra \(A\); in fact, each \(1_g\) is a central idempotent in~\(A\). Moreover, a partial action is unital if and only if it admits a globalization (see~\cite[Theorem 6.13]{Exel2017Book-Partial-action-Fell-bundle}).

The collection of all finite formal sums \(\sum_{g\in G} a_g\delta_g\) with \(a_g\in D_g\) is denoted by \(A\rtimes_{\alg}G\). The multiplication and involution on \(A\rtimes_{\alg} G\) are given by
\[
a\delta_g \cdot b\delta_h \defeq \alpha_g\big(\alpha_{g^{-1}}(a)b\big) \delta_{gh} \quad \textup{and} \quad (a\delta_g)^* \defeq \alpha_{g^{-1}}(a^*)\delta_{g^{-1}}
\]
for \(a\in D_g, b\in D_h\) and \(g,h\in G\) (see \cite[Chapter 8]{Exel2017Book-Partial-action-Fell-bundle}). Let \(\pi\colon A \to \Bound(\Hilm) \) be a representation. For \(g\in G\), define \(\pi_g\colon D_{g} \to \Bound(\Hilm)\) by \(\pi_g(a) =\pi(\alpha_{g^{-1}}(a))\). Then there exists a unique extension of \(\pi^{\prime}_g\) of \(\pi_g\) to \(A\) which annihilates
\[
 \textup{span}\{\pi_g(a)h: a\in D_g, h\in \Hilm\}^{\perp}.
\]
Define \(\widetilde{\pi}\colon A\to \Bound(\ell^2(G,\Hilm))\) by 
\[
\widetilde{\pi}(a)f(g) = \pi^{\prime}_{g}(a)f(g)
\]
where \(f\in \ell^2(G,\Hilm)\) and \(g\in G\). Proposition~3.1 of~\cite{McClanahan-1995-K-theory-Par-act} ensures that the left regular representation \(\lambda\colon  G \to \Bound(\ell^2(G,\Hilm))\) satisfies the covariance condition 
\[
\lambda_g\widetilde{\pi}(a)\lambda_{g^{-1}} =\widetilde{\pi}(\alpha_g(a))
\]
for \(a\in D_{g^{-1}}\) and \(g\in G\). And \(\lambda_g\) is given by 
\(\lambda_g(f)(h) = f(g^{-1}h)\) for \(f\in \ell^2(G,\Hilm)\). The integrated form of \((\widetilde{\pi}, \lambda)\) is given by 
\(\widetilde{\pi}\rtimes\lambda (\sum_{g\in G} a_g\delta_g) = \sum_{g\in G} \pi^{\prime}(a_g)\lambda_g \). The \emph{reduced norm}
 on \(A\rtimes_{\alg}G\) is defined by 
 \[
  \norm{f}_r =\sup\big\{\norm{\widetilde{\pi}\rtimes \lambda (f)} : \pi\colon A\to \Bound(\Hilm) \textup{ is a representation}\big\}.
 \]
 The completion of \(A\rtimes_{\alg}G\) with respect to \(\norm{\cdot}_r\) is called \emph{reduced partial crossed product} and denoted by \(A\rtimes_{\alpha, r}G\). Whenever no confusion arises, we omit the word \emph{reduced} and refer to it simply as the \emph{partial crossed product}.

Consider the partial dynamical system \((A,G,\alpha)\). The map \(i\colon A \to A\rtimes_{\alpha, r}G\) defined by \(a\mapsto a\delta_e\) is an embedding. Moreover, there is a canonical \emph{conditional expectation} \(E\colon A\rtimes_{\alpha, r}G \to A\) by 
\begin{equation}\label{equ-cond-exp}
	E(a\delta_g) = \begin{cases}
		a & \textup{if } g=e,\\
		0 & \textup{if } g\neq e
	\end{cases}
\end{equation}
for \(g\in G\) and \(a\in A_g\). A tracial state \(\tau\) on \(A\) is called \(G\)\nb-invariant if 
\(
\tau(\alpha_{g}(a)) =\tau(a)
\)
for all \(a\in D_{g^{-1}}\).
If \(\tau\) is a \(G\)\nb-invariant tracial state on \(A\), then \(A\rtimes_{\alpha, r}G\) has a tracial state \(\widetilde{\tau} = \tau \circ E\). If \(G\) is \emph{superamenable}, then any tracial state on \(A\) induces a tracial state on \(A\rtimes_{\alpha, r}G\) (see~\cite{Scarparo2017-Supramenable-group-partial-act}).

\subsection{The Haagerup property for \(C^*\)-algebras}

The Haagerup property was originally introduced for groups by Haagerup in~\cite{Haagerup-1979-An-example-non-nuclear-alg} as a notion weaker than amenability. It was subsequently extended to von Neumann algebras by Choda in~\cite{Choda-1983-Group-factor-Haagerup-type}, and later to unital \(C^*\)-algebras by Dong in~\cite{Dong-2012-Haagerup-prop-C_st-alg}. In this section, we briefly recall the Haagerup property for \(C^*\)-algebras. For a more comprehensive treatment, we refer the reader to~\cite{Dong-2012-Haagerup-prop-C_st-alg} and~\cite{Dong-Ruan-Hilbert-mod-Haagerup}.

Let \(A\) be a \(C^*\)-algebra.
A linear map \(\phi\colon A\to A\) is called \emph{completely positive} if \(\phi_n\colon \Mat_n(A) \to \Mat_n(A)\) is positive for all \(n\in\N\), where \(\Mat_n(A)\) is denoted by the collection of all \(n\times n\) matrices on \(A\). We call \(\phi\) is \emph{unital completely positive}, if it is completely positive and unit preserving. Let \(\tau\) be a faithful tracial state on \(A\). A completely positive map \(\phi\) is called \emph{\(\tau\)\nb-decreasing} if \(\tau\circ \phi \leq \tau\). 
A \(\tau\)-decreasing completely positive map \(\phi\) can be extended to a contraction \(\widetilde{\phi}\colon L^2(A,\tau) \to L^2(A,\tau)\) by 
\begin{equation}\label{eq-ind-Hilbert-map}
	 \widetilde{\phi}(a+N_\tau) = \phi(a)+N_\tau
\end{equation}
where \(L^2(A,\tau)\) is the GNS Hilbert space associated to \(\tau\) and \(N_{\tau} =\{a\in A: \tau(a^*a) =0\}\). We write \(\norm{\cdot}_{2,\tau}\) for the Hilbert space norm on \(L^2(A, \tau)\), in order to emphasize its dependence on the tracial state \(\tau\) and to distinguish it from the \(C^*\)\nb-algebra norm on~\(A\). We say \(\phi\) is \(L_2\)\nb-compact if \(\widetilde{\phi}\) is a compact operator on \(L^2(A,\tau)\). Using finite-rank operator, one can see that \(\phi\) is \(L_2\)\nb-compact if and only if for \(\epsilon>0\), there exists a finite-rank operator \(R\colon A\to A\)\footnote{The operator \(R\) can be viewed as a map from \(L^2(A,\tau) \to L^2(A,\tau)\) given by \(a+N_{\tau} \mapsto R(a)+N_{\tau}\).} such that 
\[
 \norm{\phi(a)-R(a)}_{2,\tau} \leq \epsilon \norm{a}_{2,\tau}
\] 
for all \(a\in A\).

\begin{definition}[{\cite[Definition 2.3]{Dong-2012-Haagerup-prop-C_st-alg}}]\label{def-Haagerup-prop}
Let \(A\) be a unital \(C^*\)-algebra with a faithful tracial state \(\tau\). Then \((A, \tau)\) is said to have the \emph{Haagerup property} if there exists a \(\tau\)\nb-decreasing sequence of unital completely positive map \(\{\phi_n\}_{n\in\N}\) on \(A\) such that 
\begin{enumerate}
	\item\label{def-cond-1} \(\phi_n\) is \(L_2\)\nb-compact for all \(n\in \N\);
	\item\label{def-cond-2} \(\norm{\phi_n(a)-a}_{2,\tau}\to 0\) as \(n\to \infty\) for all \(a\in A\).
\end{enumerate}
\end{definition}

\section{Haagerup property for partial crossed products}\label{sec-Haagerup-prop}

Let \((A,G, \alpha)\) be a partial dynamical system. For a map \(h\colon G \to A\) and \(g_1,g_2,\cdots, g_n \in G\), we define 
\[
 a_{i,j} \defeq \begin{cases}
 	\alpha_{g_j}(h(g_{i}^{-1}g_j)) & \textup{ if } h(g_{i}^{-1}g_j) \in D_{g_{j}^{-1}},\\
 	0 & \textup{ otherwise}
 		\end{cases}
\] 
where \(i,j=1,2,\cdots, n\). We call the map \(h\) is positive definite with respect to the partial action \(\alpha\) if the matrix \((a_{i,j})_{1\leq i,j\leq n}\geq 0\) in \(\Mat_{n}(A)\) for any \(g_1,g_2,\cdots, g_n \in G\).

Recall from~\cite{You-group-action-preserving-Haagerup-prop}, a map \(h\colon G\to A\) is vanishing at infinity with respect to a faithful tracial state \(\tau\) on \(A\) if for any \(\epsilon>0\) there exist a finite set \(F\) of \(G\) such that \(\norm{h(g)}_{2,\tau}<\epsilon\) for all \(g\notin F\), where \(\norm{\cdot}_{2,\tau}\) is the norm on the GNS Hilbert space \(L^2(A,\tau)\). We denote this by \(h\in C_{0,\tau}(G,A)\).

Motivated from an amenable action of a discrete group on a \(C^*\)-algebra (see~\cite[Definition 4.3.1]{Brown-Ozawa2008Cst-alg-and-finite-dim-approximations}), You in~\cite[Definition 1.3]{You-group-action-preserving-Haagerup-prop} defined the Haagerup property for a (global) action of groups on \(C^*\)-algebras and proved an analogous result as~\cite[Therorem 4.3.4]{Brown-Ozawa2008Cst-alg-and-finite-dim-approximations}, that is, if a (global) action has Haagerup property and the \(C^*\)-algebra has Haagerup property, then the associated crossed product also has the Haagerup property. Motivated from this result, we now introduce the Haagerup property for partial actions on \(C^*\)-algebras.

\begin{definition}\label{def-par-act-pres-Haagerup}
	Let \((A,G, \alpha)\) be a partial dynamical system. We call the partial action \(\alpha\) has the \emph{Haagerup property} with respect to a faithful tracial state \(\tau\) on \(A\) if there exists a sequence of bounded positive definite functions \(\{h_n\colon G\to Z(A)\}_{n\in \N} \subseteq C_{0,\tau}(G,A)\) such that \(h_n(g)\in D_g\) for \(g\in G\) and \(h_n\to 1\) pointwise with respect to \(\tau\), that is, \(\norm{h_n(g)-1}_{2,\tau}\to 0\) as \(n\to \infty\) for \(g\in G\).
 \end{definition}
 
 Recall the definition of globalization of a partial action from Section~\ref{prelim}. Suppose \(\eta\) is the globalization of a partial action \(\alpha\) of \(G\) on \(A\). Then \(\alpha\) has the Haagerup property with respect to a tracial state \(\tau\) on \(A\) if and only if the globalization \(\eta\) has the Haagerup property in the sense of You~(\cite[Definition 1.3]{You-group-action-preserving-Haagerup-prop}).

 \begin{lemma}\label{lem-positive-def-UCP}
 Let \((A,G, \alpha)\) be a partial dynamical system and let \(h\colon G\to Z(A)\) be a positive definite map (with respect to \(\alpha\)) satisfying \(h(g)\in D_g\) for \(g\in G\). If \(\phi\colon A\to A\) is a unital completely positive map, then the map \(\Phi\colon A\rtimes_{\alpha, r}G\to A\rtimes_{\alpha, r} G\) defined by 
 \begin{equation}\label{eq-induced-map-crossed-prod}
 \Phi\Big(\sum_{g\in G} a_g\delta_g\Big) = \sum_{g\in G}\phi(a_g)h(g)\delta_g
  \end{equation}
 is unital completely positive.
 \end{lemma}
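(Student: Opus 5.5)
The plan is to prove complete positivity on the dense $^*$\nb-subalgebra $A\rtimes_{\alg}G$ by a direct matrix computation, and then extend by continuity. Unitality is immediate: the unit of $A\rtimes_{\alpha,r}G$ is $1\delta_e$, and $\Phi(1\delta_e)=\phi(1)h(e)\delta_e=h(e)\delta_e$. So $\Phi$ is unital exactly when $h(e)=1$. I will assume this normalization, as is implicit in the statement; otherwise $\Phi$ is completely positive with $\Phi(1)=h(e)$.

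First I check that $\Phi$ is well defined on $A\rtimes_{\alg}G$. For $a_g\in D_g$, $\phi(a_g)$ need not lie in $D_g$. However, $h(g)\in D_g$ and $D_g$ is an ideal, so $\phi(a_g)h(g)\in D_g$. Hence the right-hand side of \eqref{eq-induced-map-crossed-prod} is again an element of $A\rtimes_{\alg}G$.

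\textbf{Step 1: reduction to a Schur-product positivity statement.} Positive elements of $\Mat_n(A\rtimes_{\alpha,r}G)$ are norm limits of elements $X^*X$ with $X\in\Mat_n(A\rtimes_{\alg}G)$. Absorbing the matrix size into the group sum, as in the proof of \cite[Theorem 4.3.4]{Brown-Ozawa2008Cst-alg-and-finite-dim-approximations}, it suffices to show that $\Phi_n(X^*X)\geq 0$. Writing out $X^*X$ reduces this to positivity of a matrix of the form
\[
 \Big[\Phi\big((b_k\delta_{g_k})^*(b_l\delta_{g_l})\big)\Big]_{k,l}
\]
for finitely many $g_1,\dots,g_m\in G$ and $b_k\in D_{g_k}$ (with $\Mat_n(A)$\nb-valued coefficients, which is harmless). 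By the multiplication rule,
\[
 (b_k\delta_{g_k})^*(b_l\delta_{g_l})=\alpha_{g_k^{-1}}(b_k^*b_l)\,\delta_{g_k^{-1}g_l}.
\]
Here $b_k^*b_l\in D_{g_k}\cap D_{g_l}$, so its image under $\alpha_{g_k^{-1}}$ lies in $D_{g_k^{-1}}\cap D_{g_k^{-1}g_l}$. Applying $\Phi$ gives
\[
 \phi\big(\alpha_{g_k^{-1}}(b_k^*b_l)\big)\,h(g_k^{-1}g_l)\,\delta_{g_k^{-1}g_l}.
\]

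\textbf{Step 2: factorization.} I want to rewrite this matrix as $C^*\,(P\circ Q)\,C$, where:
\begin{itemize}
\item $Q=\big[\phi(\alpha_{g_k^{-1}}(b_k^*b_l))\big]_{k,l}$, which is positive because $[\alpha_{g_k^{-1}}(b_k^*b_l)]_{k,l}$ is positive (it is the $E$\nb-image of a positive matrix) and $\phi$ is completely positive;
\item $P$ is the matrix $(a_{i,j})$ attached to $h$ and the points $g_i$ in the definition of positive definiteness with respect to $\alpha$;
\item $C$ is the diagonal matrix with entries the partial unitaries $\delta_{g_k}$, realised inside the regular representation $\widetilde\pi\rtimes\lambda$ via $\lambda_{g_k}$.
\end{itemize}
Conjugating the $(k,l)$ entry by $\lambda_{g_k}$ and $\lambda_{g_l}$ turns $\alpha_{g_k^{-1}}(\cdot)\,h(g_k^{-1}g_l)$ into $\alpha_{g_l}\big(h(g_k^{-1}g_l)\big)$ times the transported $Q$\nb-entry, using the covariance relation $\lambda_g\widetilde\pi(a)\lambda_{g^{-1}}=\widetilde\pi(\alpha_g(a))$. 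This is exactly the entry $a_{k,l}$ of $P$.

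\textbf{Step 3: positivity of the Schur product.} The entries of $P$ are central, since $h$ takes values in $Z(A)$ and $\alpha_g$ maps central elements of $D_{g^{-1}}$ to central elements of $D_g$. For a positive matrix $P$ with central, mutually commuting entries and a positive matrix $Q$, the Schur product $P\circ Q$ is positive. To see this, write $P=R^*R$ with $R$ having entries in the commutative $C^*$\nb-algebra generated by the $a_{i,j}$, and $Q=S^*S$. Then $P\circ Q$ is a compression of $P\otimes Q$, and centrality lets us commute the factors. Finally, $\Phi$ is contractive on the dense subalgebra (its norm is $\|\Phi(1)\|=1$ once positivity is known), so it extends to $A\rtimes_{\alpha,r}G$ and complete positivity passes to the closure.

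\textbf{Main obstacle.} I expect the main obstacle to be the bookkeeping in Step 2. One has to match the terms $\alpha_{g_k^{-1}}(\cdot)h(g_k^{-1}g_l)$ with the entries $a_{k,l}=\alpha_{g_l}(h(g_k^{-1}g_l))$, which are defined only when $h(g_k^{-1}g_l)\in D_{g_l^{-1}}$. One must also check that the case where the $a_{k,l}$ are set to $0$ occurs exactly when the corresponding product vanishes, because $b_k^*b_l$ then lies outside the relevant domain ideal. My first attempt here would be to multiply everything by the central projections $1_g$ when they exist. Otherwise I would work with approximate units of $D_{g_k}\cap D_{g_l}$, using $\alpha_{g}\circ\alpha_{h}\subseteq\alpha_{gh}$ to move $h(g_k^{-1}g_l)$ across $\alpha_{g_k^{-1}}$.
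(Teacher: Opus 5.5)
Your Step~2 fails, and the problem is not bookkeeping.

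\textbf{The positivity claim for $Q$ is false.} The matrix $Q=[\phi(\alpha_{g_k^{-1}}(b_k^*b_l))]_{k,l}$ is not the $E$-image of $[(b_k\delta_{g_k})^*(b_l\delta_{g_l})]_{k,l}$. That entry has degree $g_k^{-1}g_l$, so $E$ kills it whenever $g_k\neq g_l$. Moreover, $[\alpha_{g_k^{-1}}(b_k^*b_l)]_{k,l}$ is in general not even self-adjoint.

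\textbf{What positivity you actually have.} The positive matrix available is $[b_k^*b_l]_{k,l}$, via $(b_k\delta_{g_k})^*(b_l\delta_{g_l})\mapsto\lambda_{g_k}^*\widetilde{\pi}(b_k^*b_l)\lambda_{g_l}$. Apply $\Phi$ and rewrite the result as $\lambda_{g_k}^*(\,\cdot\,)\lambda_{g_l}$. Already for a global action, the middle entry is
\[
\alpha_{g_k}\bigl(\phi(\alpha_{g_k^{-1}}(b_k^*b_l))\bigr)\,\alpha_{g_k}\bigl(h(g_k^{-1}g_l)\bigr).
\]
So the $k$-th row is acted on by the map $\alpha_{g_k}\circ\phi\circ\alpha_{g_k^{-1}}$, which changes with $k$. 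Complete positivity of $\phi$ and positivity of $[b_k^*b_l]$ give no control over such a matrix unless all these maps coincide. That happens exactly when $\phi$ is $\alpha$-equivariant, i.e.\ $\phi(D_g)\subseteq D_g$ and $\phi\circ\alpha_g=\alpha_g\circ\phi$ on $D_{g^{-1}}$. Under that extra hypothesis your Schur-product scheme does go through at the algebraic level, with $[\phi(b_k^*b_l)]$ in place of $Q$.

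\textbf{The lemma is false without equivariance.} No argument can close this gap, because the statement fails for general unital completely positive $\phi$. Let $G=\mathbb{Z}_2=\{e,s\}$ act globally on $A=\C^2$ by the flip. Take $h\equiv 1$, which is positive definite since every $a_{i,j}$ equals $1$. Take the unital completely positive map $\phi(x,y)=(x,x)$. The element $z=1\delta_e+(i,-i)\delta_s$ satisfies $z^*=z$ and $z^2=2z$, so $z\geq 0$. But $\Phi(z)=1\delta_e+(i,i)\delta_s$ has $\Phi(z)^*=1\delta_e-(i,i)\delta_s\neq\Phi(z)$, so $\Phi$ is not even positive.

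The paper gives no proof beyond a pointer to the global case, so it does not get around this either. It later applies the lemma to the maps $\phi_n$ supplied by the Haagerup property of $A$, which are not assumed equivariant.

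\textbf{Two smaller points.}
\begin{enumerate}
\item You are right that unitality needs $h(e)=1$.
\item Your extension step is not valid as written. A unital map on $A\rtimes_{\alg}G$ that sends sums of squares to positive elements need not be bounded for $\norm{\cdot}_r$. For $A=\C$ and $G=\mathbb{F}_2$, the trivial character of $\C[\mathbb{F}_2]$ is such a map. You need a spatial realisation of $\Phi$, for instance of the form $V^*(\,\cdot\,)V$ on $\ell^2(G,\Hilm)$. Alternatively, factor it through completely positive maps already defined on the $C^*$-level (an equivariant $\phi\rtimes G$ composed with a multiplier), as in the global case.
\end{enumerate}
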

 \noindent The proof is analogous to that of~\cite[Theorem 3.2]{Dong-Ruan-Hilbert-mod-Haagerup} in the case of global actions, and we therefore omit the details.

\begin{lemma}\label{lem-L-2-compact}
Let \((A,G, \alpha)\) be a partial dynamical system with a faithful \(G\)\nb-invariant tracial state \(\tau\) on \(A\) and let \(h\colon G\to Z(A)\) be a positive definite map (with respect to \(\alpha\)) satisfying \(h(g)\in D_g\) for \(g\in G\). Suppose \(\phi\colon A\to A\) is a \(\tau\)\nb-decreasing unital completely positive map which is \(L_2\)\nb-compact. Let \(\Phi\) be the unital completely positive map on \(A\rtimes_{\alpha, r}G\) defined by Equation~\eqref{eq-induced-map-crossed-prod}. 
If \(h\) vanishes at infinity with respect to~\(\tau\), then the induced operator \(\widetilde{\Phi}\) on \(L^2(A\rtimes_{\alpha, r}G, \widetilde{\tau})\)
is compact (recall that \(\widetilde{\tau} =\tau\circ E\), where \(E\) is given by Equation~\eqref{equ-cond-exp}).
\end{lemma}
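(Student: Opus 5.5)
We will show that \(\widetilde{\Phi}\) is compact by realizing \(L^2(A\rtimes_{\alpha,r}G,\widetilde{\tau})\) as an orthogonal direct sum of the fibers \(\Hilm_g \defeq \overline{D_g\delta_g}\), and then approximating \(\widetilde{\Phi}\) in operator norm by operators that are block diagonal with finitely many compact blocks.

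\emph{Orthogonal decomposition.} For \(a\in D_g\) and \(b\in D_k\) we have \(\widetilde{\tau}((a\delta_g)^*(b\delta_k))=\tau\circ E\big(\alpha_{g^{-1}}(a^*)\delta_{g^{-1}}\cdot b\delta_k\big)\). This vanishes unless \(g=k\). When \(g=k\) it equals \(\tau(\alpha_{g^{-1}}(a^*b))=\tau(a^*b)\), using \(a^*b\in D_g\) and the \(G\)-invariance of \(\tau\). Hence \(L^2(A\rtimes_{\alpha,r}G,\widetilde{\tau})=\bigoplus_{g\in G}\Hilm_g\). Moreover, each map \(U_g\colon a\delta_g\mapsto a\) is an isometry from \(\Hilm_g\) onto the closure \(\overline{D_g}\subseteq L^2(A,\tau)\). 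Since \(\Phi(a\delta_g)=\phi(a)h(g)\delta_g\), the operator \(\widetilde{\Phi}\) leaves each \(\Hilm_g\) invariant. Its restriction \(\widetilde{\Phi}_g\) is conjugate, via \(U_g\), to the operator \(T_g\colon a\mapsto \phi(a)h(g)\) restricted to \(\overline{D_g}\). Here \(\phi(a)h(g)\in D_g\) because \(h(g)\in D_g\), which is an ideal. So \(\widetilde{\Phi}=\bigoplus_g \widetilde{\Phi}_g\).

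\emph{Each block is compact.} Multiplication by the central element \(h(g)\), say \(M_{h(g)}\), is bounded on \(L^2(A,\tau)\) with norm at most \(\norm{h(g)}\): indeed \(\tau(x^*h(g)^*h(g)x)\le\norm{h(g)}^2\tau(x^*x)\). Therefore \(T_g=M_{h(g)}\widetilde{\phi}\) is compact on \(L^2(A,\tau)\), and so is its compression to \(\overline{D_g}\).

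\emph{Uniform smallness of the tail.} This is the main obstacle. We need \(\norm{\widetilde{\Phi}_g}\to0\) as \(g\to\infty\), but vanishing at infinity of \(h\) only controls \(\norm{h(g)}_{2,\tau}\), not the operator norm of \(M_{h(g)}\). We would handle this by factoring through \(\widetilde{\phi}\) first. Given \(\epsilon>0\), choose a finite-rank \(R\) with \(\norm{\phi(a)-R(a)}_{2,\tau}\le\epsilon\norm{a}_{2,\tau}\), and write \(R(a)=\sum_{i=1}^m f_i(a)b_i\) with \(b_i\in A\) and \(f_i\) bounded functionals on \(L^2(A,\tau)\). Then
\[
\norm{\phi(a)h(g)}_{2,\tau}\le \epsilon\,\norm{h(g)}\,\norm{a}_{2,\tau}+\sum_{i=1}^m\norm{f_i}\,\norm{a}_{2,\tau}\,\norm{b_ih(g)}_{2,\tau}.
\]
Using centrality and the trace property, \(\norm{b_ih(g)}_{2,\tau}\le\norm{b_i}\,\norm{h(g)}_{2,\tau}\). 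Since \(h\) is bounded, say \(\sup_g\norm{h(g)}\le C\), it follows that \(\norm{\widetilde{\Phi}_g}\le \epsilon C+ K\norm{h(g)}_{2,\tau}\), where \(K=\sum_i\norm{f_i}\norm{b_i}\). The hypothesis \(h\in C_{0,\tau}(G,A)\) then gives a finite set \(F\subseteq G\) with \(\norm{\widetilde{\Phi}_g}\le 2C\epsilon\) for \(g\notin F\). (If boundedness of \(h\) were not available, we would use positive definiteness instead: the \(1\times1\) case together with a \(2\times2\) matrix argument should bound \(\norm{h(g)}\) by \(\norm{h(e)}\).)

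\emph{Conclusion.} The operator \(\bigoplus_{g\in F}\widetilde{\Phi}_g\), with zero on the remaining blocks, is compact, being a finite sum of compact operators. It differs from \(\widetilde{\Phi}\) in norm by at most \(\sup_{g\notin F}\norm{\widetilde{\Phi}_g}\le 2C\epsilon\). Since \(\epsilon>0\) was arbitrary and the compact operators are norm closed, \(\widetilde{\Phi}\) is compact.
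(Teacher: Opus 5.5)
Your proof is correct. It shares the paper's basic split into a finite set of group elements and its complement, but it handles the tail by a genuinely different and sounder estimate.

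The paper approximates $\widetilde{\Phi}$ directly by the finite-rank operators $T_n(\sum_g a_g\delta_g)=\sum_{g\in F_n}\phi_n(a_g)h(g)\delta_g$. Here $F_n$ is chosen from $h$ alone and the $\phi_n$ are finite-rank approximants of $\phi$. Both error sums are then bounded through the inequality $\tau\big(h^*X^*Xh\big)\le\tau(X^*X)\,\tau(h^*h)$, i.e.\ $\norm{Xh}_{2,\tau}\le\norm{X}_{2,\tau}\norm{h}_{2,\tau}$. That inequality fails in general: take $A=\C^2$ with the uniform trace and $X=h=(1,0)$. It is precisely what the paper uses to get $\norm{\phi(a_g)h(g)}_{2,\tau}\le\frac1n\norm{\widetilde{\phi}}\,\norm{a_g}_{2,\tau}$ for $g\notin F_n$.

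You isolate exactly this issue: $L^2$-smallness of $h(g)$ does not control the operator norm of multiplication by $h(g)$. Your fix works as follows.
\begin{enumerate}
\item First replace $\phi$ by a finite-rank $R$ whose range is spanned by $b_i\in A$.
\item The finite-rank part is then controlled by the valid bound $\norm{b_ih(g)}_{2,\tau}\le\norm{b_i}\,\norm{h(g)}_{2,\tau}$.
\item The remainder is controlled by $\epsilon\sup_g\norm{h(g)}$.
\item The finite set $F$ is chosen only afterwards, depending on $R$.
\end{enumerate}
The block-diagonal picture $\widetilde{\Phi}=\bigoplus_g\widetilde{\Phi}_g$ helps as well. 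The fibres $\overline{D_g\delta_g}$ are mutually orthogonal, and the isometries $U_g$ come from $G$-invariance of $\tau$. This lets you conclude via norm-closedness of the compact operators instead of writing down explicit finite-rank approximants.

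The paper's scheme can be repaired along the same lines. Pick $\phi_n$ first, then choose $F_n$ depending on $\phi_n$, and use $\norm{Xh}_{2,\tau}\le\norm{h}\,\norm{X}_{2,\tau}$. At that point it becomes essentially your argument.

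Your fallback on boundedness is also right. For $g_1=e$, $g_2=g$, the positive $2\times2$ matrix has off-diagonal entry $h(g^{-1})$ and diagonal entries $h(e)$ and either $\alpha_g(h(e))$ or $0$. Hence $\norm{h(g)}\le\norm{h(e)}$ for all $g$. So the uniform $C^*$-norm bound you need holds even though the lemma does not state it.
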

\begin{proof}
Since \(h\) is vanishing at infinity with respect to \(\tau\), for each \(n\in \N\) there exists a finite set \(F_n\subset G\) such that \(\norm{h(g)}_{2,\tau}<1/n\) for \(g\notin F_n\). Since \(\phi\) is \(L_2\)-compact there exists a sequence of finite-rank operators \(\{\phi_n\}_{n\in \N}\) on \(A\) such that 
\(\norm{\widetilde{\phi}-\widetilde{\phi}_n}_{2,\tau}<1/n\). Define \(T_n\) on \(A\rtimes_{\alpha, r}G\) by 
\[
T_n\Big(\sum_{g\in G} a_g\delta_g\Big) = \sum_{g\in F_n}\phi_n(a_g)h(g)\delta_{g}
\]
where \(\sum_{g\in G} a_g\delta_g \in A\rtimes_{\alg}G\).
The induced map \(\widetilde{T}_n\) on \(L^2(A\rtimes_{\alpha, r}G, \widetilde{\tau})\) is finite-rank as \(\phi_n\) is so. For \(x=\sum_{g\in G}a_g\delta_g \in A\rtimes_{\alg}G\), we have 
\[
\Phi(x)-T_n(x) = \sum_{g\in F_n} (\phi-\phi_n)(a_g)h(g)\delta_g +\sum_{g\notin F_n}\phi(a_g)h(g)\delta_g.
\]
Denote these two sums by \(M_1\) and \(M_2\), respectively and using the inequality \(\norm{\widetilde{\Phi}(x)-\widetilde{T}_n(x)}^2_{2,\tilde{\tau}} \leq 2(\norm{M_1}^2_{2, \tilde{\tau}} + \norm{M_2}^2_{2, \tilde{\tau}})\), we estimate each term separately.
For \(M_1\), we compute 
\begin{align*}
\norm{M_1}^2_{2,\tilde{\tau}} &=\tau\Big(\sum_{g\in F_n} h^*(g)(\phi-\phi_n)^*(a_g)(\phi-\phi_n)(a_g)h(g)\Big) \\
&= \sum_{g\in F_n} \tau\big( h^*(g)(\phi-\phi_n)^*(a_g)(\phi-\phi_n)(a_g)h(g) \big) \\
&\leq \sum_{g\in F_n}\tau\big((\phi-\phi_n)^*(a_g)(\phi-\phi_n)(a_g)\big) \tau\big( h^*(g)h(g)\big)\\
&= \sum_{g\in F_n} \norm{(\widetilde{\phi}-\widetilde{\phi}_n)(a_g)}^2_{2,\tau} \norm{h(g)}^2_{2,\tau}. 
\end{align*}
Since \(\norm{\widetilde{\phi}-\widetilde{\phi}_n}_{2,\tau}<1/n\) and \(h\) is bounded, the last term above dominated by 
\[
\sum_{g\in F_n} \frac{1}{n^2}\norm{a_g}^2_{2,\tau} K \leq \frac{K}{n^2}\sum_{g\in F_n}\tau(a^*_ga_g) = \frac{K}{n^2}\norm{x}^2_{2,\tilde{\tau}}
\]
where \(K=\sup_{g\in G}\norm{h(g)}^2_{2,\tau}\). For \(M_2\), using \(\norm{h(g)}_{2, \tau} \leq \frac{1}{n}\) for \(g\notin F\) and boundedness of \(\widetilde{\phi}\), and a similar computation as above gives us 
\[
\norm{M_2}^2_{2,\tilde{\tau}} \leq \sum_{g\notin F_n}\frac{1}{n^2} \norm{\widetilde{\phi}}^2_{2,\tau} \norm{a_g}^2_{2,\tau} \leq \frac{1}{n^2} \norm{\widetilde{\phi}}^2_{2,\tau} \norm{x}^2_{2,\tilde{\tau}}.
\]
Therefore, \(\norm{\widetilde{\Phi}(x)-\widetilde{T}_n(x)}^2_{2,\tilde{\tau}} \leq \frac{1}{n}\big(2(K+\norm{\widetilde{\phi}})\big)^{\frac{1}{2}}\norm{x}^2_{2,\tilde{\tau}}\). Thus, \(\widetilde{\Phi}\) is compact.
\end{proof}

 \begin{proposition}\label{prop-partial-crossed-prod-Haagerup}
 Let \((A,G, \alpha)\) be a partial dynamical system such that \(\alpha\) has the Haagerup property with respect to a \(G\)\nb-invariant faithful tracial state \(\tau\) on \(A\). If \((A, \tau)\) has the Haagerup property, then the partial crossed product \((A\rtimes_{\alpha, r}G, \widetilde{\tau})\) also has the Haagerup property.
 \end{proposition}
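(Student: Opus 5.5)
We build the approximating maps on $A\rtimes_{\alpha,r}G$ by combining the two sequences provided by the hypotheses: maps $\phi_n$ on $A$ from the Haagerup property of $(A,\tau)$, and functions $h_n$ on $G$ from the Haagerup property of $\alpha$. Concretely, let $\{\phi_n\}$ be $\tau$-decreasing unital completely positive $L_2$-compact maps on $A$ with $\norm{\phi_n(a)-a}_{2,\tau}\to 0$. Let $\{h_n\colon G\to Z(A)\}$ be bounded positive definite functions in $C_{0,\tau}(G,A)$ with $h_n(g)\in D_g$ and $\norm{h_n(g)-1}_{2,\tau}\to 0$. Define
\[
\Phi_n\Big(\sum_g a_g\delta_g\Big)=\sum_g \phi_n(a_g)h_n(g)\delta_g .
\]
Lemma~\ref{lem-positive-def-UCP} shows that $\Phi_n$ is unital completely positive. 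Lemma~\ref{lem-L-2-compact} shows that $\widetilde{\Phi}_n$ is compact on $L^2(A\rtimes_{\alpha,r}G,\widetilde{\tau})$. Before using these, I would normalize $h_n(e)=1$, or check that positive definiteness forces $h_n(e)\ge 0$ central with $h_n(e)=1$ implied by unitality in the lemma. I would also record that $\norm{h_n(g)}\le \norm{h_n(e)}$, which bounds $h_n$ in norm by $1$.

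Next I check that $\Phi_n$ is $\widetilde{\tau}$-decreasing. For positive $x$ we have $\widetilde{\tau}(\Phi_n(x))=\tau(E\Phi_n(x))=\tau(\phi_n(E(x))h_n(e))$, which equals $\tau(\phi_n(E(x)))\le\tau(E(x))=\widetilde{\tau}(x)$. This uses $h_n(e)=1$ and the fact that $E(x)\ge0$.

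For the convergence $\norm{\Phi_n(x)-x}_{2,\widetilde{\tau}}\to0$, I first treat $x=\sum_{g\in F}a_g\delta_g$ in $A\rtimes_{\alg}G$. The $\widetilde{\tau}$-norm is orthogonal across the $\delta_g$, as computed in the proof of Lemma~\ref{lem-L-2-compact}. So
\[
\norm{\Phi_n(x)-x}^2_{2,\widetilde{\tau}}=\sum_{g\in F}\norm{\phi_n(a_g)h_n(g)-a_g}^2_{2,\tau}.
\]
Each term splits as
\[
\norm{(\phi_n(a_g)-a_g)h_n(g)}_{2,\tau}+\norm{a_g(h_n(g)-1)}_{2,\tau}
\le \norm{h_n(g)}\,\norm{\phi_n(a_g)-a_g}_{2,\tau}+\norm{a_g}\,\norm{h_n(g)-1}_{2,\tau}.
\]
Here I use $\norm{yb}_{2,\tau}\le\norm{b}\,\norm{y}_{2,\tau}$ and $\norm{ab}_{2,\tau}\le \norm{a}\,\norm{b}_{2,\tau}$. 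Both terms tend to $0$ because $\sup_n\norm{h_n(g)}\le 1$ and $F$ is finite.

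To pass to general $x$, I use density of $A\rtimes_{\alg}G$ in $L^2$ together with the uniform bound $\norm{\widetilde{\Phi}_n}\le1$ and a standard $\epsilon/3$ argument. The bound $\norm{\widetilde{\Phi}_n}\le1$ follows from Kadison--Schwarz, $\Phi_n(x)^*\Phi_n(x)\le\Phi_n(x^*x)$, combined with the $\widetilde{\tau}$-decreasing property.

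The main obstacle I expect is the uniform norm boundedness of $h_n$ and the normalization $h_n(e)=1$. The definition only says the $h_n$ are bounded, not uniformly so. If positive definiteness does not directly give $\norm{h_n(g)}\le\norm{h_n(e)}$, I would first derive it from the $2\times2$ matrix with $g_1=e$, $g_2=g$, whose positivity controls $\alpha_g(h_n(g))$ by $h_n(e)$ in the centre. I would then replace $h_n$ by $h_n(e)^{-1/2}$-type normalizations, or simply argue that unitality of $\Phi_n$ in Lemma~\ref{lem-positive-def-UCP} forces $h_n(e)=1$.
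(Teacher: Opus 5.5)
Your proposal follows the paper's proof: the same maps $\Phi_n$ built from $\phi_n$ and $h_n$, Lemmas~\ref{lem-positive-def-UCP} and~\ref{lem-L-2-compact} for complete positivity and $L_2$-compactness, and the same orthogonal decomposition of $\norm{\Phi_n(x)-x}^2_{2,\widetilde{\tau}}$ over the $\delta_g$-components. Your version is more careful than the paper's: you check $\widetilde{\tau}$-decreasingness explicitly, you add the density step, and you use operator-norm estimates where the paper's bound $\norm{\phi_n(a_g)(h_n(g)-1)}^2_{2,\tau}\le\norm{\phi_n(a_g)}_{2,\tau}\norm{h_n(g)-1}^2_{2,\tau}$ is not valid as written.

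The only soft spot is $h_n(e)=1$. You cannot deduce it from the unitality claim of Lemma~\ref{lem-positive-def-UCP}, since $\Phi_n(1)=h_n(e)$ and, for instance, $h\equiv 0$ is positive definite. It has to be read into the definition, as the paper tacitly does (compare $h_n(e)=\eta_n(e)1=1$ in (2)$\Rightarrow$(3)). Separately, your uniform bound on $\norm{h_n(g)}$ is unnecessary if you split as $\phi_n(a_g)(h_n(g)-1)+(\phi_n(a_g)-a_g)$ and use $\norm{\phi_n(a_g)}\le\norm{a_g}$.
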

 \begin{proof}
Since \((A, \tau)\) has the Haagerup property, there exists a \(\tau\)\nb-decreasing sequence of unital completely positive maps \(\{\phi_n\}_{n\in \N}\) on \(A\) satisfying Conditions~\eqref{def-cond-1} and~\eqref{def-cond-2} of Definition~\ref{def-Haagerup-prop}. The Haagerup property of \(\alpha\) gives us a sequence of bounded positive definite functions \(\{h_n\colon G\to Z(A)\}_{n\in \N} \subseteq C_{0,\tau}(G,A)\) such that \(h_n(g)\in D_g\) for \(g\in G\) and \(h_n\to 1\) pointwise with respect to \(\tau\). For \(n\in \N\), define a completely positive map~\(\Phi_n\) on \(A\rtimes_{\alpha, r}G\) corresponding to \(\phi_n\) and \(h_n\) as in Equation~\eqref{eq-induced-map-crossed-prod}. The sequence \(\{\Phi_n\}_{n\in \N}\) is \(\widetilde{\tau}\)-decreasing as \(\{\phi_n\}_{n\in \N}\) is \(\tau\)\nb-decreasing. 
Lemma~\ref{lem-L-2-compact} ensures that \(\Phi_n\) is \(L_2\)\nb-compact for all \(n\in \N\). Let \(f=\sum_{g\in F}a_g\delta_g \in A\rtimes_{\alg}G\) for some finite set \(F\subset G\). Then
\begin{align*}
	\norm{\Phi_n(f)-f}^2_{2,\tilde{\tau}} &=\big\|\sum_{g\in F}\phi_n(a_g)h_n(g)\delta_g-a_g\delta_{g}\big\|^2_{2,\tilde{\tau}}\\
	&= \tau\Big(\sum_{g\in F} \big(\phi_n(a_g)h_n(g)-a_g\big)^*\big(\phi_n(a_g)h_n(g)-a_g\big)\Big)\\
	&=\sum_{g\in F}\norm{\phi_n(a_g)h_n(g)-a_g}^2_{2, \tau}\\
	&\leq 2 \sum_{g\in F} \bigr(\norm{\phi_n(a_g)h_n(g)-\phi_n(a_g)}^2_{2, \tau} + \norm{\phi_n(a_g)-a_g}^2_{2, \tau}\bigr)\\
	&\leq 2 \sum_{g\in F} \bigr(\norm{\phi_n(a_g)}_{2,\tau}\norm{h_n(g)-1}^2_{2, \tau} + \norm{\phi_n(a_g)-a_g}^2_{2, \tau}\bigr)\\
	&\leq 2 \sum_{g\in F} K\norm{h_n(g)-1}^2_{2, \tau} + \norm{\phi_n(a_g)-a_g}^2_{2, \tau}\bigr) \to 0\quad  \textup{as } n\to \infty
\end{align*}
where \(K=\sup_{g\in F}\norm{\phi_n(a_g)}_{2,\tau}\leq \sup_{g\in F} \norm{a_g}_{2, \tau}\) as each \(\phi_n\) is unital. Therefore, \(\Phi_n\) converges pointwise to the identity map on \(L^2(A\rtimes_{\alpha, r}G, \widetilde{\tau})\). This completes the proof.
\end{proof}
 
 \begin{lemma}\label{lem-Haagerup-prop-for-alg-A}
 	Let \((A,G, \alpha)\) be a partial action with a \(G\)\nb-invariant faithful tracial state \(\tau\) on \(A\). If \((A\rtimes_{\alpha, r}G, \widetilde{\tau})\) has the Haagerup property, then \((A, \tau)\) has the Haagerup property.
 \end{lemma}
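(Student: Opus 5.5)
The natural approach is to compress the approximating maps of the crossed product back to \(A\) using the conditional expectation. Since \((A\rtimes_{\alpha, r}G, \widetilde{\tau})\) has the Haagerup property, there is a \(\widetilde{\tau}\)\nb-decreasing sequence of unital completely positive maps \(\{\Psi_n\}_{n\in\N}\) on \(A\rtimes_{\alpha, r}G\) that are \(L_2\)\nb-compact and satisfy \(\norm{\Psi_n(x)-x}_{2,\widetilde{\tau}}\to 0\) for every \(x\). I would define
\[
\phi_n \defeq E\circ \Psi_n\circ i\colon A\to A,
\]
where \(i\colon A\to A\rtimes_{\alpha, r}G\), \(a\mapsto a\delta_e\), is the canonical embedding and \(E\) is the conditional expectation of Equation~\eqref{equ-cond-exp}. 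Then I would verify the conditions of Definition~\ref{def-Haagerup-prop} one at a time.

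\emph{Unital completely positive and \(\tau\)\nb-decreasing.} Each \(\phi_n\) is a composition of unital completely positive maps, since \(i\) is a unital \(^*\)\nb-homomorphism and \(E\) is a unital conditional expectation, hence completely positive. For positive \(a\in A\),
\[
\tau(\phi_n(a))=\tau(E(\Psi_n(a\delta_e)))=\widetilde{\tau}(\Psi_n(a\delta_e))\leq \widetilde{\tau}(a\delta_e)=\tau(a).
\]

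\emph{\(L_2\)\nb-compactness and convergence.} The key observation is that \(L^2(A,\tau)\) embeds isometrically into \(L^2(A\rtimes_{\alpha, r}G,\widetilde{\tau})\) via \(a+N_\tau\mapsto a\delta_e+N_{\widetilde\tau}\), because \(\widetilde{\tau}((a\delta_e)^*(a\delta_e))=\tau(a^*a)\). Call this isometry \(V\). Next I would check that \(E\) extends to a contraction on the GNS spaces and that this extension is exactly the orthogonal projection \(P=VV^*\) onto the closed subspace \(\overline{A\delta_e}\). Indeed, \(\widetilde{\tau}\bigl((a\delta_e)^*b\delta_g\bigr)=\tau(E(\alpha\text{-product}))\) vanishes for \(g\neq e\), so the spaces spanned by different \(\delta_g\) are orthogonal and \(E\) simply picks out the \(e\)\nb-component. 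It follows that \(\widetilde{\phi}_n=V^*\widetilde{\Psi}_nV\), which is compact whenever \(\widetilde{\Psi}_n\) is. For convergence,
\[
\norm{\phi_n(a)-a}_{2,\tau}=\norm{E(\Psi_n(a\delta_e)-a\delta_e)}_{2,\widetilde{\tau}}\leq \norm{\Psi_n(a\delta_e)-a\delta_e}_{2,\widetilde{\tau}}\to 0.
\]

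The main obstacle I expect is justifying rigorously that \(E\) is \(\norm{\cdot}_{2}\)\nb-contractive, that is, that it induces the orthogonal projection. This requires the orthogonality computation
\[
\widetilde{\tau}\bigl((a\delta_g)^*(b\delta_h)\bigr)=0 \quad \textup{for } g\neq h,
\]
which follows from the multiplication formula: the product \((a\delta_g)^*(b\delta_h)\) lies in the \(\delta_{g^{-1}h}\) component, and \(E\) kills it. Combining this with density of \(A\rtimes_{\alg}G\) in the GNS space gives \(\norm{E(x)}_{2,\tau}\leq\norm{x}_{2,\widetilde{\tau}}\), and the remaining steps follow as above.
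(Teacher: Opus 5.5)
Your proposal is correct and follows essentially the same route as the paper. Both compress the approximating maps by the conditional expectation, $\phi_n = E\circ\Psi_n$ on $A\delta_e$, and use $L^2$-contractivity of $E$ to get both the compactness and the pointwise convergence. The differences are cosmetic: the paper gets the convergence estimate from the Kadison--Schwarz inequality $E(x)^*E(x)\le E(x^*x)$ and compactness from the finite-rank approximants $E\circ R_n$, whereas you get contractivity from orthogonality of the $\delta_g$-components and compactness from $\widetilde{\phi}_n = V^*\widetilde{\Psi}_n V$, which is a slightly cleaner way to phrase the same argument.
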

\begin{proof}
Since \((A\rtimes_{\alpha, r}G, \widetilde{\tau})\) has the Haagerup property, there exist a sequence \(\{\Phi_n\}_{n\in \N}\) of unital completely positive maps on \(A\rtimes_{\alpha, r}G\) satisfying the conditions of Definition~\ref{def-Haagerup-prop}. Recall the conditional expectation \(E\colon A\rtimes_{\alpha, r}G\to A\) from Equation~\eqref{equ-cond-exp}. For \(n\in \N\), define unital completely positive map on \(A\) by
\[
\phi_n(a)=E\circ \Phi_n(a) \quad \textup{for } a\in A.
\]
Then \(\tau\circ \phi_n \leq \tau\) as \(\widetilde{\tau}\circ \Phi_n \leq \widetilde{\tau}\). Let \(\epsilon >0\). Since \(\Phi_n\) is \(L_2\)\nb-compact, there exist a finite-rank operator \(R_n\) on \(A\rtimes_{\alpha, r}G\) such that 
\[
\norm{\Phi_n(f)-R_n(f)}_{2,\tilde{\tau}} \leq \epsilon \norm{f}_{2, \tilde{\tau}}
\] 
for \(f\in A\rtimes_{\alpha, r}G\). Now \(E\circ R_n\) is also a finite-rank operator on \(A\) and we have
\begin{multline*}
\norm{\phi_n(a)-E\circ R_n(a)}_{2,\tau} = \norm{E\circ\Phi_n(a)-E\circ R_n(a)}_{2,\tau} \\ \leq \norm{\Phi_n(a)- R_n(a)}_{2,\tilde{\tau}}\leq \epsilon \norm{a}_{2, \tilde{\tau}} = \epsilon\norm{a}_{2,\tau} \quad \textup{for } a\in A.  
\end{multline*}

For \(a\in A\), we have 
\begin{align*}
	\norm{\phi_n(a)-a}^2_{2,\tau} &= \tau\big( (\phi_n(a)-a)^*(\phi_n(a)-a)\big) \\
	&= \tau\big( (E(\phi_n(a)-a))^*(E(\phi_n(a)-a))\big)\\
&\leq \tau\circ E\big( (\phi_n(a)-a)^*(\phi_n(a)-a) \big) \\
&= \norm{\Phi_n(a)-a}^2_{2,\tilde{\tau}} \to 0 \quad \textup{as } n\to \infty. 
\end{align*}
Therefore, \((A, \tau)\) has the Haagerup property.
\end{proof}
\begin{theorem}\label{thm-equi-cond-Haagerup-prop}
Let \(\alpha=(\{D_g\}_{g\in G}, \{\alpha_g\}_{g\in G})\) be a unital partial action of \(G\) on \(A\), that is, each \(D_g\) is unital and \(\tau\) is a \(G\)\nb-invaraint tracial state on \(A\). Then the following statements are equivalent.
\begin{enumerate}
	\item The partial crossed product \((A\rtimes_{\alpha, r}G, \widetilde{\tau})\) has the Haagerup property.
	\item Both \((A, \tau)\) and \(G\) have the Haagerup property.
	\item The partial action \(\alpha\) has the Haagerup property and \((A,\tau)\) has the Haagerup property.
\end{enumerate}
\end{theorem}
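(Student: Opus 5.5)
I will prove the cycle (3)\(\Rightarrow\)(1)\(\Rightarrow\)(2)\(\Rightarrow\)(3). The implication (3)\(\Rightarrow\)(1) is exactly Proposition~\ref{prop-partial-crossed-prod-Haagerup}, once we note that \(\tau\) is faithful, which is implicit in the statement since the Haagerup property of \((A,\tau)\) presupposes it.

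For (1)\(\Rightarrow\)(2), Lemma~\ref{lem-Haagerup-prop-for-alg-A} already gives the Haagerup property of \((A,\tau)\). For \(G\), I would use that \(\alpha\) is unital, so it has a globalization \((B,G,\eta)\), with each \(1_g\) a central projection. The plan is to produce positive definite functions on \(G\) from the maps \(\Phi_n\) given by (1). Set
\[
\varphi_n(g)\defeq \widetilde{\tau}\big(\Phi_n(1_g\delta_g)(1_g\delta_g)^*\big),
\]
where \(u_g=1_g\delta_g\) is a partial isometry in \(A\rtimes_{\alpha,r}G\) with \(u_g^*u_g=1_{g^{-1}}\).

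The difficulty is that \(\varphi_n(g)\) need not be positive definite, because the \(u_g\) are not unitaries. I expect this to be the main obstacle. To handle it, I would pass to the globalization. By Morita/Rieffel-type results (Exel, Abadie), \(A\rtimes_{\alpha,r}G\) is a full corner \(p(B\rtimes_{\eta,r}G)p\), where \(p=1_A\). I would then use that the Haagerup property passes to such settings, or else argue directly as follows. The function
\[
g\mapsto \widetilde{\tau}\big(u_{g_i}^*\cdots\big)
\]
arises from the Hilbert space \(L^2\) via Stinespring. The matrix \(\big(\widetilde{\tau}(u_{g_i}\Phi_n(u_{g_i}^*u_{g_j})u_{g_j}^*)\big)_{i,j}\) is positive by complete positivity. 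Adjusting by the weights \(\tau(1_g)\) yields a positive definite function that is \(c_0\) by \(L_2\)-compactness, since \(\norm{u_g}_{2,\widetilde{\tau}}\) behaves like an orthonormal system, and that converges to \(\tau(1_g)\). If \(\tau(1_g)\) can tend to \(0\), this breaks. In that case I would instead aim to show that \(G\) acts on \(B\) and derive the Haagerup property of \(G\) from \(B\rtimes G\) as in You's global result.

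For (2)\(\Rightarrow\)(3), take positive definite \(\psi_n\in c_0(G)\) with \(\psi_n\to1\) pointwise, and set \(h_n(g)=\psi_n(g)1_g\in Z(A)\cap D_g\). I will check positive definiteness with respect to \(\alpha\). The entries are \(a_{ij}=\psi_n(g_i^{-1}g_j)\,\alpha_{g_j}(1_{g_i^{-1}g_j}1_{g_j^{-1}})=\psi_n(g_i^{-1}g_j)1_{g_j}1_{g_i}\), using the partial action identities. So \((a_{ij})=P\,(\psi_n(g_i^{-1}g_j))\,P\) with \(P=\mathrm{diag}(1_{g_i})\) central, and this is positive. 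Vanishing at infinity follows from \(\norm{h_n(g)}_{2,\tau}\le|\psi_n(g)|\), and boundedness is clear. However, \(\norm{h_n(g)-1}_{2,\tau}\to\norm{1_g-1}_{2,\tau}\), which is zero only if \(\tau(1_g)=1\). So convergence to \(1\) must be interpreted as convergence to \(1_g\), that is, to the unit of \(D_g\). I would adopt this reading of Definition~\ref{def-par-act-pres-Haagerup} for unital partial actions, and correspondingly use \(1_g\) in Proposition~\ref{prop-partial-crossed-prod-Haagerup}, where it suffices since \(a_g=a_g1_g\).
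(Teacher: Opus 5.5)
Your handling of (3)$\Rightarrow$(1) and of the $A$-part of (1)$\Rightarrow$(2) coincides with the paper (Proposition~\ref{prop-partial-crossed-prod-Haagerup} and Lemma~\ref{lem-Haagerup-prop-for-alg-A}). The genuine gap is the $G$-part of (1)$\Rightarrow$(2). You correctly see that $g\mapsto\widetilde{\tau}(\Phi_n(u_g)u_g^*)$ need not be positive definite, but none of your fallbacks is carried out, and none can be, because for genuinely partial actions this implication is false. Take $A=\C$, $\tau=\mathrm{id}$, $D_e=\C$ and $D_g=\{0\}$ (so $1_g=0$) for $g\neq e$, with $G$ an infinite property (T) group such as $\mathrm{SL}(3,\mathbb{Z})$. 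This is a unital partial action (the restriction of translation on $c_0(G)$ to the functions supported at $e$). Here $\tau$ is faithful and $G$-invariant, and $A\rtimes_{\alpha,r}G=\C\delta_e\cong\C$ has the Haagerup property, while $G$ does not. The same example defeats each of your routes. The globalization gives $c_0(G)\rtimes_r G\cong\Comp(\ell^2(G))$, whose corner by $1_A$ is just $\C$; moreover $c_0(G)$ is non-unital with no $G$-invariant tracial state, so You's theorem is unavailable. Your positive kernel $\widetilde{\tau}\bigl(u_{g_i}\Phi_n(u_{g_i}^*u_{g_j})u_{g_j}^*\bigr)$ depends on $u_{g_i}^*u_{g_j}=1_{g_i^{-1}}1_{g_i^{-1}g_j}\delta_{g_i^{-1}g_j}$, not only on $g_i^{-1}g_j$. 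And reweighting by $\tau(1_g)$ breaks exactly because $\tau(1_g)=0$. The paper's own proof fails at precisely the points you flag. Its positive-definiteness computation replaces $(1_{g_j^{-1}}\delta_{g_j^{-1}})(1_{g_i}\delta_{g_i})=1_{g_j^{-1}}1_{g_j^{-1}g_i}\delta_{g_j^{-1}g_i}$ by $1_{g_j^{-1}g_i}\delta_{g_j^{-1}g_i}$. Its convergence estimate treats $\widetilde{\tau}\bigl(1_g\delta_g(1_g\delta_g)^*\bigr)=\tau(1_g)$ as $1$. Both steps are valid only when every $1_g=1$, i.e.\ for global actions; in the example the paper's $\eta_n$ is the indicator of $\{e\}$.

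Your objection in (2)$\Rightarrow$(3) is also correct, and the paper does not address it even though it uses the same $h_n(g)=\psi_n(g)1_g$. Since $h_n(g)\in 1_gA$ and $1-1_g$ is a central projection, $\norm{h_n(g)-1}_{2,\tau}\geq\norm{1-1_g}_{2,\tau}$. So by faithfulness, Definition~\ref{def-par-act-pres-Haagerup} can hold only if every $1_g=1$. Replacing $1$ by $1_g$ does make (2)$\Rightarrow$(3)$\Rightarrow$(1) work: in Proposition~\ref{prop-partial-crossed-prod-Haagerup}, use $\norm{\phi_n(a_g)(h_n(g)-1_g)}_{2,\tau}\leq\norm{a_g}\,\norm{h_n(g)-1_g}_{2,\tau}$ together with $a_g=a_g1_g$. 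But this changes the theorem, and under that reading (3) no longer implies (2): the example above satisfies it with $h_n(e)=1$ and $h_n(g)=0$ for $g\neq e$. So your argument establishes only (2)$\Rightarrow$(3)$\Rightarrow$(1)$\Rightarrow$``$(A,\tau)$ has the Haagerup property''. Closing the cycle requires an extra hypothesis, such as $\alpha$ being global. (A minor slip: the identity $\alpha_g(1_{g^{-1}}1_{g^{-1}h})=1_g1_h$ gives $a_{ij}=\psi_n(g_i^{-1}g_j)1_{g_i}1_{g_j}$ only with $\alpha_{g_i}$ and $1_{g_i^{-1}}$, the Brown--Ozawa ordering. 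With $\alpha_{g_j}$ and $1_{g_j^{-1}}$ as you wrote it, one gets $1_{g_j}1_{g_jg_i^{-1}g_j}$.)
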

\begin{proof}
\noindent (1).\(\implies\)(2). Lemma~\ref{lem-Haagerup-prop-for-alg-A} ensures that \((A, \tau)\) has the Haagerup property.

Since \((A\rtimes_{\alpha, r}G, \widetilde{\tau})\) has the Haagerup property there exist a sequence \(\{\Phi_n\}_{n\in \N}\) of unital completely positive maps on \(A\rtimes_{\alpha, r}G\) satisfying the conditions of Definition~\ref{def-Haagerup-prop}.
 To prove \(G\) has the Haagerup property, we define \(\eta_n\colon G\to \C\) by 
\[
 \eta_n(g) = \widetilde{\tau}\big(\Phi_n(1_g\delta_g)(1_{g}\delta_{g})^*\big)
\]
where \(1_g\) is the identity element of \(D_g\), \(g\in G\) and \(n\in \N\). Now \(\eta_n(e) =\widetilde{\tau}\big(\Phi_n(1_e\delta_e)1_e\delta_e\big) =1 \). For \(g_1,g_2,\cdots,g_k\in G\) and \(c_1,c_2,\cdots, c_k\in \C\), we have
 
\begin{align*}
	\sum_{i, j=1}^{k} c_{i}\overline{c_j}\eta_n(g_j^{-1}g_i)&= \sum_{i, j=1}^{k}  c_{i}\overline{c_j}\widetilde{\tau}\Big(\Phi_n(1_{g_j^{-1}g_i}\delta_{g_{j}^{-1}g_i}) (1_{g_j^{-1}g_i}\delta_{g_j^{-1}g_i})^*\Big)\\
	&= \sum_{i, j=1}^{k}  c_{i}\overline{c_j}\widetilde{\tau}\Big(\Phi_n (1_{g_j^{-1}g_i}\delta_{g_{j}^{-1}g_i})  (1_{g_i^{-1}g_j}\delta_{g_{i}^{-1}g_j})\Big) \\
	&=\sum_{i, j=1}^{k}  \widetilde{\tau}\Big(c_i\overline{c_j} \Phi_n\big((1_{g_j^{-1}}\delta_{g_{j}^{-1}})(1_{g_i}\delta_{g_i})\big) (1_{g_i^{-1}}\delta_{g^{-1}_i}1_{g_j}\delta_{g_j})\Big)\\
	&= \sum_{i, j=1}^{k}  \widetilde{\tau}\Big(\overline{c_j}(1_{g_j}\delta_{g_j}) \Phi_n\big((1_{g_j^{-1}}\delta_{g_{j}^{-1}})(1_{g_i}\delta_{g_i})\big) (c_{i}1_{g_i^{-1}}\delta_{g^{-1}_i})\Big)\\
	&= \widetilde{\tau}\Big(\sum_{i, j=1}^{k}  \big(\overline{c_j}(1_{g_j}\delta_{g_j}) \Phi_n\big((1_{g_j^{-1}}\delta_{g_{j}^{-1}})(1_{g_i}\delta_{g_i})\big) (c_{i}1_{g_i^{-1}}\delta_{g^{-1}_i})\Big) \geq 0.
\end{align*} 
 Therefore, \(\eta_n\) is positive definite on \(G\). For \(g\in G\),
 \begin{align*}
 	 |\eta_n(g)-1| &= |\widetilde{\tau}\big(\Phi_n(1_g\delta_g)1_{g^{-1}}\delta_{g^{-1}}\big) -1| \\
 	 &= |\widetilde{\tau}\big(\Phi_n(1_g\delta_g)1_{g^{-1}}\delta_{g^{-1}}\big) -\widetilde{\tau}(1_g\delta_g\cdot 1_{g^{-1}}\delta_{g^{-1}})| \\
 	&= |\widetilde{\tau}\bigr(\big(\Phi_n(1_g\delta_g) - 1_g\delta_g\big) 1_{g^{-1}}\delta_{g^{-1}}\bigr)| \\
 	&\leq  |\widetilde{\tau}\big(\Phi_n(1_g\delta_g) - 1_g\delta_g\big)|\\
 	& \leq \norm{\Phi_n(1_g\delta_g) - 1_g\delta_g}_{2,\tilde{\tau}} \to 0 \quad \textup{as } n\to 0.
 \end{align*} 
  Therefore, \(\eta_n\to 1\) pointwise. Moreover, \(\eta_n\) is vanishing at infinity as \(\Phi_n\) is \(L_2\)\nb-compact. Hence, \(G\) has the Haagerup property.

\noindent (2).\(\implies\)(3). Since \(G\)
 has the Haagerup property, there exist a sequence of positive linear functions \(\{\eta_n\}_{n\in \N}\) such that \(\eta_n(e)=1\), \(\eta_n\) is vanishing at infinity for all \(n \) and \(\eta_n\to 1\) pointwise. Let \(1_g\) be the unit element of \(D_{g}\) for \(g\in G\). For \(n\in \N\), we define \(h_n(g)=\eta_n(g)1_g\). Then the sequence \(\{h_n\}_{n\in\N}\) will fulfill the requirement of Definition~\ref{def-par-act-pres-Haagerup} for the partial action \(\alpha\). Therefore, \(\alpha\) has the Haagerup property.
  
\noindent (3).\(\implies\)(1). Follows from Proposition~\ref{prop-partial-crossed-prod-Haagerup}.
\end{proof}

\section{Haagerup property for inductive limits of \(C^*\)-algebras}\label{sec-ind-limit} 

In this section, we show that the Haagerup property preserves under inductive limits and apply this result to study the Haagerup property for inductive limits of partial crossed products.

Let  \((A^{(n)}, \phi_n)_{n\in \N}\) be an inductive system in a category \(\mathcal{C}\). An \emph{inductive limit} of this system is a pair \((A, \mu_n)\), where \(A\) is an object of \(\mathcal{C}\) and each \(\mu_{n}\colon A^{(n)} \to A\) is a morphism in \(\mathcal{C}\) satisfying \(\mu_{n+1}\circ \phi_n = \mu_{n}\) for \(n\in \N\).
Moreover, the inductive limit is unique up to a canonical isomorphism: if \((B,\lambda_n)\) is another limit of the same system, then there exists a unique morphism, \(\lambda \colon A \to B\) such that \(\lambda \circ \mu_n = \lambda_n\) for \(n \in \N\). Inductive limits may not always exist in a category. For example, in the category of finite sets the inductive limit do not exists. However, in the category of \(C^*\)-algebras inductive limits always exists (see~\cite[Proposition 6.2.4]{Rordam2002K-theory-book}).

 \begin{lemma}\label{lem-identification-Hilbert-spac}
 	Let \( (A^{(n)},\phi_n)_{n\in \mathbb{N}} \) be an inductive system of unital \(C^*\)\nb-algebras, and let \(\{\tau_n\}_{n\in\mathbb{N}}\) be faithful tracial states such that \(	\tau_{n+1}\circ \phi_n=\tau_n\) for all \(n\in \N\).
 	Let \((A,\tau)=\varinjlim (A^{(n)},\tau_n)\), where \(\tau\) is the unique tracial state on \(A\) satisfying \(\tau\circ \lambda_n=\tau_n\) for \(n\in \N\), where \(\lambda_n\colon A_n\to A\) is the canonical embedding. Then there is a canonical unitary
 	\[
 	L^2(A,\tau)\;\cong\;\varinjlim L^2(A^{(n)},\tau_n),
 	\]
 	where the Hilbert space inductive limit is taken with respect to the isometric maps
 	\[
 	U_n\colon L^2(A^{(n)},\tau_n)\to L^2(A^{(n+1)},\tau_{n+1}),
 	\quad \textup{by} \quad
 	U_n(a+N_{\tau_n})=\phi_n(a)+N_{\tau_{n+1}}.
 	\]
 \end{lemma}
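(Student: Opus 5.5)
The plan is to build the unitary directly from the GNS maps of the connecting morphisms $\lambda_n$, use the universal property of the Hilbert space inductive limit to get an isometry, and then prove surjectivity by a density argument.

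\textbf{Step 1: the $U_n$ are well-defined isometries.} For $a\in A^{(n)}$, the trace compatibility $\tau_{n+1}\circ\phi_n=\tau_n$ gives
\[
\norm{\phi_n(a)+N_{\tau_{n+1}}}^2_{2,\tau_{n+1}}=\tau_{n+1}(\phi_n(a^*a))=\tau_n(a^*a)=\norm{a+N_{\tau_n}}^2_{2,\tau_n}.
\]
Since the $\tau_n$ are faithful, $N_{\tau_n}=0$, so each $U_n$ is a well-defined isometry. It extends from the dense subspace $A^{(n)}$ to the completion, and $(L^2(A^{(n)},\tau_n),U_n)$ is an inductive system of Hilbert spaces. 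Its limit $\mathcal H_\infty$ is the completion of the increasing union of the images of the spaces $L^2(A^{(n)},\tau_n)$, with canonical isometries $\iota_n$ satisfying $\iota_{n+1}\circ U_n=\iota_n$.

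\textbf{Step 2: compatible isometries into $L^2(A,\tau)$.} Define $V_n\colon L^2(A^{(n)},\tau_n)\to L^2(A,\tau)$ by $V_n(a+N_{\tau_n})=\lambda_n(a)+N_\tau$. The same computation, now using $\tau\circ\lambda_n=\tau_n$, shows that each $V_n$ is isometric. From $\lambda_{n+1}\circ\phi_n=\lambda_n$ we get $V_{n+1}\circ U_n=V_n$. By the universal property of the Hilbert space inductive limit, equivalently by defining $V(\iota_n\xi)=V_n\xi$ on the dense union and extending, there is a unique isometry $V\colon\mathcal H_\infty\to L^2(A,\tau)$ with $V\circ\iota_n=V_n$. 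Well-definedness on the union follows from the compatibility relation.

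\textbf{Step 3: surjectivity (the main point).} $V$ has closed range because it is an isometry, so it suffices to show the range is dense. The range contains $\bigcup_n\lambda_n(A^{(n)})+N_\tau$. By construction of the $C^*$-inductive limit, $\bigcup_n\lambda_n(A^{(n)})$ is norm-dense in $A$. The norm estimate
\[
\norm{x}_{2,\tau}=\tau(x^*x)^{1/2}\le\norm{x}
\]
shows that norm-density implies $\norm{\cdot}_{2,\tau}$-density. Since $A/N_\tau$ is dense in $L^2(A,\tau)$, the range of $V$ is dense, and hence $V$ is unitary.

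The only subtlety I anticipate is that $\tau$ on $A$ need not be faithful, even though each $\tau_n$ is. This is harmless, because the argument works in the quotient $A/N_\tau$ throughout. Canonicity of $V$ follows from its uniqueness subject to $V\circ\iota_n=V_n$.
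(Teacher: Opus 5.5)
Your proof is correct and is essentially the paper's argument run in the opposite direction. The paper defines \(\pi_0(\lambda_n(a))=a+N_{\tau_n}\) from \(\bigcup_n\lambda_n(A^{(n)})\) into \(\bigcup_n L^2(A^{(n)},\tau_n)\), and so must check well-definedness when \(\lambda_n(a)=\lambda_m(b)\). It does this by asserting \(\phi_{n,k}(a)=\phi_{m,k}(b)\) for some \(k\). That is true here only because faithfulness of the \(\tau_n\) makes the connecting maps injective; it fails for general \(C^*\)-inductive limits. Your \(V\) is the inverse of the paper's \(\pi\), and its well-definedness comes for free from \(V_{n+1}\circ U_n=V_n\).

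One correction to your closing remark: \(\tau\) is in fact automatically faithful here. \(N_\tau\) is a closed two-sided ideal that meets each \(\lambda_n(A^{(n)})\) trivially, so the quotient map \(A\to A/N_\tau\) is isometric on the dense union \(\bigcup_n\lambda_n(A^{(n)})\) and hence injective. Your caveat is therefore unnecessary, though harmless to the argument.
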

 \begin{proof}
 	Since \(\tau_{n+1}\circ\phi_n=\tau_n\), each map \(U_n\) is a well-defined isometry:
 	\[
 	\|U_n(a+N_{\tau_n})\|_{2,\tau_{n+1}}^2
 	= \tau_{n+1}(\phi_n(a)^*\phi_n(a))
 	= \tau_n(a^*a)
 	= \|a+N_{\tau_n}\|_{2,\tau_n}^2.
 	\]
 	Hence we may identify each \(L^2(A^{(n)},\tau_n)\) as a subspace of \(L^2(A^{(n+1)},\tau_{n+1})\). Set
 	\[
 	\Hilm_0=\bigcup_{n\geq 1} L^2(A^{(n)},\tau_n),
 	\]
 	and let \(\Hilm\) be the Hilbert space completion of \(\Hilm_0\). Then \(\Hilm=\varinjlim L^2(A^{(n)},\tau_n)\). Let \(\lambda_n\colon A^{(n)}\to A\) denote the canonical embedding of the inductive limit. Define a linear map \(\pi_0 \colon \bigcup_{n\geq 1}\lambda_n(A^{(n)})\to \Hilm_0\)
 	by
 	\[
 	\pi_0(\lambda_n(a))=a+N_{\tau_n}\in L^2(A^{(n)},\tau_n)\subseteq \Hilm_0.
 	\]
 	If \(\lambda_n(a)=\lambda_m(b)\) in \(A\), then choose some \(k\geq n,m\) with
 	\(\phi_{n,k}(a)=\phi_{m,k}(b)\), where \(\phi_{n,k}\colon A^{(n)} \to A^{(k)}\) given by \(\phi_{n,k}=\phi_k\circ\phi_{k-1}\circ \cdots\circ \phi_{n+1}\circ\phi_n\). Similarly, one can define \(\phi_{m,k}\). Analogously, we can define \(U_{n,k}\) and \(U_{m,k}\). Since
 	\[
 	U_{n,k}(a+N_{\tau_n})
 	=\phi_{n,k}(a)+N_{\tau_k}
 	=\phi_{m,k}(b)+N_{\tau_k}
 	=U_{m,k}(b+N_{\tau_m}),
 	\]
 	the map \(\pi_0\) is well-defined. Moreover,
 	\[
     \norm{\pi_0(\lambda_n(a))}_{2,\tau_n}^2
 	=\tau_n(a^*a)
 	=\tau(\lambda_n(a)^*\lambda_n(a)),
 	\]
 	so \( \pi_0 \) preserves the inner product induced by \(\tau\).
 	Thus \(\pi_0\) extends uniquely to an isometry \(\pi \colon L^2(A,\tau)\to \Hilm\).
 	Since \(\bigcup_{n\geq 1}\lambda_n(A^{(n)})\) is dense in \(A\) and \(\Hilm_0\) is dense in \(\Hilm\), the range of \(\pi\) is dense in \(\Hilm\). Hence, \(\pi\) is unitary.
 \end{proof}

\begin{lemma}\label{lem-L2-compact-limit}
	Let \((A^{(n)},\phi_n)_{n\in\mathbb{N}}\) be an inductive system of unital \(C^*\)\nb-algebras with faithful tracial states 
	\(\{\tau_n\}_{n\in\mathbb{N}}\) satisfying \(\tau_{n+1}\circ \phi_n = \tau_n\) for all \(n\in \N\).
	Let \((A,\tau)=\varinjlim (A^{(n)},\tau_n)\), and identify \(L^2(A,\tau)\cong \varinjlim L^2(A^{(n)},\tau_n)\)
	as in Lemma~\ref{lem-identification-Hilbert-spac}. 
	Fix \(n\in\mathbb{N}\), and identify \(L^2(A^{(n)},\tau_n)\) as a closed subspace of
	\(L^2(A,\tau)\). If \(T\in \Comp(L^2(A^{(n)},\tau_n))\)
	is compact, then the operator \(T^{\prime} \colon L^2(A,\tau)\to L^2(A,\tau)\) by
	\[
	T^{\prime}(\xi)=
	\begin{cases}
		T(\xi), & \textup{ if }\xi\in L^2(A^{(n)},\tau_n),\\
		0, & \textup{ if }\xi\in L^2(A^{(n)},\tau_n)^\perp,
	\end{cases}
	\]
	is also compact.
\end{lemma}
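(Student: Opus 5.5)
Write $\Hilm_n \defeq L^2(A^{(n)},\tau_n)$, viewed (via Lemma~\ref{lem-identification-Hilbert-spac} and the isometries $U_{n,k}$) as a closed subspace of $L^2(A,\tau)$, and let $P_n$ be the orthogonal projection of $L^2(A,\tau)$ onto $\Hilm_n$. Since $T$ is defined on $\Hilm_n$, let $\iota_n\colon \Hilm_n\to L^2(A,\tau)$ be the isometric inclusion. The plan is to write $T^{\prime}$ as a composition
\[
T^{\prime} = \iota_n\circ T\circ P_n ,
\]
where $P_n$ is regarded as a map $L^2(A,\tau)\to\Hilm_n$. This identity is checked directly from the definition. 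For $\xi\in\Hilm_n$ we have $P_n\xi=\xi$, so the right-hand side gives $T(\xi)$. For $\xi\in\Hilm_n^{\perp}$ we have $P_n\xi=0$, so it gives $0$. Both sides are linear, and $L^2(A,\tau)=\Hilm_n\oplus\Hilm_n^{\perp}$, so they agree everywhere. In particular, $T^{\prime}$ is well defined and bounded, with $\norm{T^{\prime}}\le\norm{T}$.

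Compactness then follows from the ideal property: a compact operator composed on either side with bounded operators is compact, and here $P_n$ and $\iota_n$ are both contractions. If one prefers an explicit argument, take finite-rank operators $R_k$ on $\Hilm_n$ with $\norm{T-R_k}\to 0$. Then the operators $\iota_n R_k P_n$ have finite rank on $L^2(A,\tau)$, and
\[
\norm{T^{\prime}-\iota_n R_kP_n}\le\norm{T-R_k}\to 0,
\]
so $T^{\prime}$ is a norm limit of finite-rank operators.

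The only step that needs any care is confirming that $\Hilm_n$ really is a \emph{closed} subspace of $L^2(A,\tau)$, so that $P_n$ exists. This holds because the unitary of Lemma~\ref{lem-identification-Hilbert-spac} restricts on $\Hilm_n$ to the isometry $a+N_{\tau_n}\mapsto\lambda_n(a)+N_\tau$. The image of a complete space under an isometry is complete, hence closed.

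Everything else is routine, so I do not expect a genuine obstacle here. The real difficulty will come later, in passing from these extended maps to $L_2$-compactness of completely positive maps on the limit algebra; that is not needed for the present lemma.
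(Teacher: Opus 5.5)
Your proposal is correct and follows essentially the same route as the paper: the paper also writes $T^{\prime}=TP_n$ (your $\iota_n T P_n$, with the inclusion left implicit) and approximates it by the finite-rank operators $T_mP_n$, using $\norm{T^{\prime}-T_mP_n}\le\norm{T-T_m}\to 0$. Your extra check that $\Hilm_n$ is closed in $L^2(A,\tau)$, because it is the isometric image of a complete space, is a detail the paper builds into the statement rather than verifying.
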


\begin{proof}
	Let \(\Hilm=L^2(A,\tau)\) and \(\Hilm_n=L^2(A^{(n)},\tau_n)\), viewed as a closed subspace of \(\Hilm\). Let \(P_n\) denote the orthogonal projection of \(\Hilm\)
	onto \(\Hilm_n\). Then the operator \(T^{\prime}\) can be written as \(T^{\prime} = T P_n\). Thus, \(T^{\prime}\) acts on \(\Hilm_n\) as \(T\) and zero on \(\Hilm_n^{\perp}\). Since \(T\) is compact on the Hilbert space \(\Hilm_n\), there exists a
	sequence of finite-rank operators \(T_m\) on \(\Hilm_n\) such that
	\[
	\norm{T-T_m} \to 0 \quad \textup{as} \quad m\to \infty.
	\]
	Define \(T^{\prime}_m = T_m P_n\) on \(\Hilm\). Then each \(T^{\prime}_m\) has finite-rank in \(\Hilm\),
	because its range is contained in the finite-dimensional subspace
	\(\textup{rang}(T_m)\subseteq H_n\).
	Moreover,
	\[
	\norm{T^{\prime}-T^{\prime}_m}
	= \norm{(T-T_m)P_n}
	\leq \norm{T-T_m}  \to 0 \quad \textup{as} \quad m\to \infty.
	\]
	Hence \(T^{\prime}\) is compact.
\end{proof}

\begin{proposition}\label{prop-ind-limit-Haagerup}
	Let \((A^{(n)},\phi_n)_{n\in \N}\) be an inductive system of unital \(C^*\)-algebras with a sequence of faithful tracial states \(\{\tau_n\}_{n\in \N}\) satisfying \(\tau_{n+1}\circ \phi_n = \tau_n\) for all \(n \in \N\). Let \((A, \tau) = \varinjlim (A^{(n)}, \tau_n)\). If each \((A^{(n)}, \tau_n)\) has the Haagerup property, then the inductive limit \((A, \tau)\) has the Haagerup property.
\end{proposition}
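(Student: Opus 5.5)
The plan is to build the approximating maps on the level of $L^2$ and von Neumann algebras, where compressions onto the subalgebras exist, and then return to $A$. Write $\Hilm=L^2(A,\tau)$ and $\Hilm_n=L^2(A^{(n)},\tau_n)$. By Lemma~\ref{lem-identification-Hilbert-spac}, $\Hilm$ is the closure of the increasing union of the $\Hilm_n$. Let $P_n$ be the projection of $\Hilm$ onto $\Hilm_n$; then $P_n\to 1$ strongly. For each $n$, fix a sequence $\{\psi^{(n)}_k\}_{k\in\N}$ of $\tau_n$-decreasing unital completely positive maps on $A^{(n)}$ witnessing the Haagerup property of $(A^{(n)},\tau_n)$, with induced compact contractions $\widetilde{\psi}^{(n)}_k$ on $\Hilm_n$.

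\textbf{Step 1 (von Neumann level).} Let $M=\pi_\tau(A)''$ and $M_n=\pi_\tau(\lambda_n(A^{(n)}))''\subseteq M$. The trace $\tau$ extends to a faithful normal trace on $M$, so there is a unique $\tau$-preserving normal conditional expectation $E_n\colon M\to M_n$. Its $L^2$-extension is exactly $P_n$. Next I extend $\psi^{(n)}_k$ to a normal $\tau$-decreasing unital completely positive map $\bar\psi^{(n)}_k$ on $M_n$, using the standard argument that a $\tau$-decreasing map is $L^2$-bounded and hence extends normally; Dong's comparison of the $C^*$- and von Neumann Haagerup properties is the reference here. Then I set
\[
\Psi_{n,k}=\bar\psi^{(n)}_k\circ E_n\colon M\to M .
\]
This map is unital completely positive and $\tau$-decreasing. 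Its $L^2$-extension is $\widetilde{\psi}^{(n)}_k P_n$, which is compact by Lemma~\ref{lem-L2-compact-limit}.

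\textbf{Step 2 (diagonal choice).} I will choose a sequence $\{\xi_j\}$, dense in $\Hilm$, with $\xi_j\in\Hilm_{m_j}$; this assumes separability, or else I index by finite subsets and use nets. For each $n$, pick $k(n)$ with
\[
\norm{\widetilde{\psi}^{(n)}_{k(n)}\xi_j-\xi_j}_{2}<1/n \quad\text{for all } j\le n \text{ with } m_j\le n .
\]
This is possible since $\Hilm_{m_j}\subseteq\Hilm_n$ and $\widetilde{\psi}^{(n)}_k\to 1$ pointwise on $\Hilm_n$. Put $\Theta_n=\Psi_{n,k(n)}$. Then
\[
\norm{\widetilde\Theta_n\xi-\xi}_2\le \norm{\widetilde{\psi}^{(n)}_{k(n)}(P_n\xi-\xi)}_2+\norm{\widetilde{\psi}^{(n)}_{k(n)}\xi-\xi}_2 ,
\]
and the first term is at most $\norm{P_n\xi-\xi}_2$. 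So $\widetilde\Theta_n\to 1$ strongly on the dense set, and hence everywhere by uniform boundedness. This gives the Haagerup property for $(M,\tau)$ with compact $L^2$-extensions.

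\textbf{Step 3 (descending to $A$), the main obstacle.} The maps $\Theta_n$ land in $M$, not in $A$, because $E_n$ does not map $A$ into $\lambda_n(A^{(n)})$ in general. The first thing I would try is a perturbation to a map with range in $A$. Write $\widetilde\Theta_n$ as a norm-limit of finite-rank maps $\sum_i\langle\cdot,\eta_i\rangle\zeta_i$ with $\eta_i,\zeta_i\in\Hilm_n$. Then approximate the relevant vectors of $L^2(M_n)$ by elements of $A^{(n)}$, using Kaplansky density to keep norms under control. From this I would produce unital completely positive $\tau$-decreasing maps $A\to A$ within $1/n$ of $\widetilde\Theta_n$ in operator norm on $\Hilm$; compactness survives the small perturbation, and pointwise convergence survives as well.

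If the perturbation cannot be made completely positive, the fallback is to invoke the equivalence between the Haagerup property of $(A,\tau)$ and that of $(\pi_\tau(A)'',\tau)$, which I take to be available from Dong's work. I expect verifying this completely positive descent to be the hardest part of the argument; Steps 1 and 2 are routine given Lemmas~\ref{lem-identification-Hilbert-spac} and~\ref{lem-L2-compact-limit}.
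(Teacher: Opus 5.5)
\textbf{There is a genuine gap, in Step 3.} Your Steps 1 and 2 are correct. $\bar\psi^{(n)}_k\circ E_n$ is a normal, unital completely positive, $\tau$-decreasing map on $M=\pi_\tau(A)''$. Its $L^2$-operator is $\widetilde\psi^{(n)}_kP_n$, which is compact by Lemma~\ref{lem-L2-compact-limit}, and your diagonal choice of $k(n)$ gives strong convergence to the identity. Separability is in fact automatic: compact operators have separable range and $\widetilde\psi^{(n)}_k\to 1$ strongly on $\Hilm_n$, so each $\Hilm_n$, and hence $\Hilm$, is separable. But what Steps 1--2 prove is the Haagerup property of the von Neumann algebra $M$. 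Definition~\ref{def-Haagerup-prop} asks for maps $A\to A$, and Step 3, which you yourself flag as the main obstacle, is not carried out.

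The perturbation plan cannot work as described. A finite-rank operator $\sum_i\langle\cdot,\hat b_i\rangle\hat c_i$ with $b_i,c_i\in A^{(n)}$ is the map $x\mapsto\sum_i\tau(b_i^*x)c_i$, which is in general neither positive nor unital. Being within $1/n$ of $\widetilde\Theta_n$ in the operator norm of $\Bound(\Hilm)$ gives no control over complete positivity, which is a condition on the $C^*$-order that the $L^2$-operator norm does not see. Kaplansky density controls the norms of the individual approximants, not the positivity of the map they define. What is missing is a genuinely new ingredient. One sufficient form would be a unital completely positive $\tau$-decreasing $\theta\colon M\to A$ that is $\norm{\cdot}_{2,\tau}$-close to the identity on a prescribed finite set. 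Then $\theta\circ\Theta_n|_A$ would have all the required properties, since $\widetilde\theta$ is a contraction by Kadison--Schwarz. Nothing in your proposal produces such a map.

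The fallback does not fill the hole either. The normal-extension argument of your Step 1 gives the implication from $(A,\tau)$ to $\pi_\tau(A)''$. You need the converse, which is exactly the problem of manufacturing $A$-valued maps. You give neither an argument nor a precise statement and reference for it, so the only nontrivial step is assumed rather than proved. If you can cite a theorem giving that converse for the definition used here, then Steps 1--2 plus that theorem would be a complete proof along a route different from the paper's. As written, it is not.

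You also cannot borrow this step from the paper, because its proof passes over the same point. It asserts that each $\varphi_{n,k}$ extends to a unital completely positive, $\tau$-decreasing $\psi_{n,k}\colon A\to A$ to which Lemma~\ref{lem-L2-compact-limit} applies, i.e.\ with $\widetilde\psi_{n,k}=\widetilde\varphi_{n,k}P_n$. Since $\tau$ is faithful, any such map must coincide on $A$ with $\bar\varphi_{n,k}\circ E_n$, where $\bar\varphi_{n,k}$ is the normal extension as in your Step 1. So its existence is precisely your Step 3 problem, and, as you observe, $E_n$ need not map $A$ into $\lambda_n(A^{(n)})$. Arveson's extension theorem gives extensions into $\Bound(\Hilm)$, not into $A$, and gives no control on $\tau$ or on $\Hilm_n^{\perp}$.

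The paper also takes $\psi_n=\psi_{n,n}$ with no diagonal selection. Its convergence estimate lets the level of the test element $x=\lambda_n(a)$ move with $n$, so it does not show $\norm{\psi_n(x)-x}_{2,\tau}\to 0$ for a fixed $x$; your choice of $k(n)$ is what that step needs. In short, you have correctly located the difficulty, but your proposal, like the paper, leaves the passage from $M$ back to $A$ unproved.
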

\begin{proof}
Since each \((A^{(n)}, \tau_n)\) has the Haagerup property, there exists a sequence \(\{\varphi_{n,k}\}_{k\in \N}\) of unital completely positive maps on \(A^{(n)}\) such that \(\tau_n\circ \varphi_{n,k}\leq \tau_n\) for all \(k\in \N\). Moreover, \(\varphi_{n,k}\) is \(L_2\)\nb-compact for all \(k\in \N\) and \(\norm{\varphi_{n,k}(a)-a}_{2,\tau_n}\to 0\) as \(k\to \infty\) for all \(a\in A^{(n)}\). We extend \(\varphi_{n,k}\) to a unital completely positive map \(\psi_{n,k}\) on \(A\) satisfying \(\psi_{n,k}(\lambda_n(a)) = \lambda_n(\varphi_{n,k}(a))\) for all \(a\in A^{(n)}\). Then \(\psi_{n,k}\) is \(\tau\)-preserving, that is, \(\tau\circ \psi_{n,k}\leq \tau\). Set  \(\psi_n =\psi_{n,n}\) for \(n\in \N\). Then Lemma~\ref{lem-L2-compact-limit} ensures that \(\psi_n\) is \(L_2\)\nb-compact for all \(n\in \N\). Let \(x=\lambda_n(a)\in \bigcup_{m}\lambda_m(A^{(m)})\), where \(a\in A^{(n)}\). Then
\begin{multline*}
	\norm{\psi_n(x)-x}_{2,\tau} = \norm{\psi_{n,n}(\lambda_n(a))-\lambda_n(a)}_{2, \tau}  = \norm{\lambda_n(\varphi_{n,n}(a))-\lambda_n(a)}_{2, \tau} \\
	= \norm{\varphi_{n,n}(a)-a}_{2, \tau_n} \to 0 \quad \textup{as } n\to \infty. 
\end{multline*}  
After passing to the limit, we conclude that \(\norm{\psi_n(x)-x}_{2,\tau} \to 0\) as \(n\to \infty\) for all \(x\in A=\varinjlim A^{(n)}\). 
Therefore, \((A, \tau)\) has the Haagerup property.
\end{proof}
Recall from~\cite[Definition 3.2]{Hossain-Inductive-limits-partial}, that an inductive system of partial dynamical system is \(\big((A^{(n)}, G, \alpha^{(n)}), \phi_n\big)_{n\in \N}\) where \((A^{(n)}, \phi_n)_{n\in \N}\) is an inductive system of \(C^*\)\nb-algebras and \(\phi_n\colon A^{(n)} \to A^{(n+1)}\) is \(G\)\nb-equivariant for \(n\in \N\). Proposition~3.3 of~\cite{Hossain-Inductive-limits-partial} says that if \(\big((A^{(n)}, G, \alpha^{(n)}), \phi_n\big)_{n\in \N}\) is an inductive system of partial dynamical system, then there exist a unique partial action \(\alpha\) of \(G\) on \(A =\varinjlim A^{(n)}\) such that \(\alpha_g = \varinjlim \alpha_g^{(n)}\) for all \(g\in G\). We call \(\alpha\) as the \emph{inductive limit partial action}.
\begin{corollary}\label{coro-ind-lim-part-Haage}
	Let \(\big((A^{(n)}, G, \alpha^{(n)}), \phi_n  \big)_{n\in \N}\) be an inductive system of partial dynamical system with a sequence of  \(G\)\nb-invariant faithful tracial states \(\{\tau_n\}_{n\in \N}\) satisfying \(\tau_{n+1}\circ \phi_n = \tau_n\). Assume \(G\) is an amenable group. If each \(\alpha_n\) has the Haagerup property and each \((A^{(n)}, \tau_n)\) has the Haagerup property, then the partial crossed product \(A\rtimes_{\alpha,r}G\) has the Haagerup property, where \(\alpha\) is the inductive limit partial action and \(A =\varinjlim A^{(n)}\).
\end{corollary}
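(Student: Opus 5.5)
The plan is to reduce the statement to Proposition~\ref{prop-partial-crossed-prod-Haagerup}, applied to the limit system \((A,G,\alpha)\) with the limit trace \(\tau\). This needs three things: (i) \(\tau\) is a faithful \(G\)\nb-invariant tracial state on \(A\); (ii) \((A,\tau)\) has the Haagerup property; (iii) the limit partial action \(\alpha\) has the Haagerup property with respect to \(\tau\).

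Item (ii) is exactly Proposition~\ref{prop-ind-limit-Haagerup}, since the hypotheses on \((A^{(n)},\tau_n)\) match. For (i), I would check invariance on the dense union \(\bigcup_n\lambda_n(D^{(n)}_{g^{-1}})\) of \(D_{g^{-1}}\). There we have \(\tau(\alpha_g(\lambda_n(a)))=\tau(\lambda_n(\alpha^{(n)}_g(a)))=\tau_n(\alpha_g^{(n)}(a))=\tau_n(a)=\tau(\lambda_n(a))\), and continuity gives invariance on all of \(D_{g^{-1}}\). This uses that the ideals of the limit partial action are \(D_g=\overline{\bigcup_n\lambda_n(D^{(n)}_g)}\), as in \cite{Hossain-Inductive-limits-partial}. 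Faithfulness of \(\tau\) is part of the setup \((A,\tau)=\varinjlim(A^{(n)},\tau_n)\) used in Lemma~\ref{lem-identification-Hilbert-spac}; I would take it as given there.

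The main step is (iii), and this is where amenability of \(G\) enters. The Haagerup property of the individual \(\alpha^{(n)}\) does not obviously pass to the limit: the functions \(h^{(n)}_k\) live on different algebras, and a diagonal choice \(\lambda_n\circ h^{(n)}_n\) has no uniform vanishing at infinity. Instead I would bypass the \(\alpha^{(n)}\) and use amenability to produce positive definite functions directly.

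Since \(G\) is amenable, it has the Haagerup property. More concretely, there are finitely supported positive definite functions \(\eta_k\colon G\to\C\) with \(\eta_k(e)=1\) and \(\eta_k\to1\) pointwise; for instance \(\eta_k(g)=\langle\lambda_g\xi_k,\xi_k\rangle\) with F{\o}lner-type unit vectors \(\xi_k\). I would define \(h_k(g)=\eta_k(g)\,c_g\), where \(c_g\) should play the role of \(1_g\) in the proof of Theorem~\ref{thm-equi-cond-Haagerup-prop}~(2)\(\Rightarrow\)(3). These \(h_k\) are automatically in \(C_{0,\tau}(G,A)\) because they are finitely supported, and they are bounded. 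Positive definiteness with respect to \(\alpha\) and centrality are where care is needed.

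If the \(\alpha^{(n)}\) are unital with compatible units, then \(1_g=\lim\lambda_n(1^{(n)}_g)\) lies in \(D_g\) and is central, and the argument of Theorem~\ref{thm-equi-cond-Haagerup-prop} applies verbatim. Otherwise I would take \(c_g\) to be a central approximate unit element of \(D_g\) and pass to a limit. The approximate route is the step I expect to be the main obstacle: one must verify that the matrix \(\bigl(\eta_k(g_i^{-1}g_j)\alpha_{g_j}(c_{g_i^{-1}g_j})\bigr)\) is positive, and that \(\norm{c_g-1}_{2,\tau}\to0\) along the sequence. The fallback is to use the Haagerup property of \(\alpha^{(n)}\) via \(\lambda_n\circ h^{(n)}_k\) in a diagonal argument. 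In that version, positivity and centrality pass through the equivariant \(*\)\nb-homomorphisms \(\lambda_n\), and finite support can be enforced by multiplying by the amenable \(\eta_k\). Once (iii) is in hand, Proposition~\ref{prop-partial-crossed-prod-Haagerup} finishes the proof.
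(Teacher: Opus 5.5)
\textbf{There is a genuine gap at step (iii).} Your steps (i) and (ii) are fine, but the proof depends entirely on (iii), and (iii) is not established. It is exactly where the hypothesis that each \(\alpha^{(n)}\) has the Haagerup property must be used, and neither of your routes uses it successfully.

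Your main route ignores that hypothesis. It needs central elements \(c_g\in D_g\) with \(\norm{c_g-1}_{2,\tau}\to 0\) to exist in the limit algebra for free, and nothing in your argument provides them. The center of \(A=\varinjlim A^{(n)}\) can be much smaller than the images \(\lambda_n(Z(A^{(n)}))\). In the extreme case \(Z(A)=\C 1\) with \(D_g\) a proper nonzero ideal, one has \(D_g\cap Z(A)=0\), so no admissible \(h_k(g)\) can approach \(1\).

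Your fallback rests on a false claim. A \(^*\)-homomorphism without dense range need not map central elements to central elements. So \(\lambda_n(h^{(n)}_k(g))\) is central in \(\lambda_n(A^{(n)})\), but in general not in \(A\). Take the system \(\C^2\to\Mat_2(\C)\to\Mat_2(\C)\to\cdots\): a diagonal embedding followed by identities, with the compatible faithful traces \(\tau_1(a,b)=(a+b)/2\) and the normalized trace. The central projection \((1,0)\) becomes the non-central matrix unit \(e_{11}\) in the limit. Multiplying by the scalar functions \(\eta_k\) fixes the support but not centrality, and Definition~\ref{def-par-act-pres-Haagerup} demands \(h_k(g)\in Z(A)\).

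\textbf{The missing idea:} you never need the limit partial action to have the Haagerup property. The paper uses amenability for a different purpose. By \cite[Theorem 3.6]{Hossain-Inductive-limits-partial}, the reduced crossed product commutes with the inductive limit, \(A\rtimes_{\alpha,r}G\cong\varinjlim A^{(n)}\rtimes_{\alpha^{(n)},r}G\). Also \(\widetilde{\tau}=\varinjlim\widetilde{\tau}_n\) by \cite[Proposition 4.10]{Hossain-Inductive-limits-partial}. The hypotheses on \(\alpha^{(n)}\) and \((A^{(n)},\tau_n)\) are then used at each finite stage: Proposition~\ref{prop-partial-crossed-prod-Haagerup} gives the Haagerup property of \((A^{(n)}\rtimes_{\alpha^{(n)},r}G,\widetilde{\tau}_n)\). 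Proposition~\ref{prop-ind-limit-Haagerup}, applied to the inductive system of crossed products rather than to the coefficient algebras, finishes the proof. In short: form the crossed products first, then pass to the limit.
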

\begin{proof}
	Since \(G\) is amenable, by~\cite[Theorem 3.6]{Hossain-Inductive-limits-partial}, we have 
	\[
	 A\rtimes_{\alpha, r} G\cong \varinjlim A^{(n)}\rtimes_{\alpha^{(n)}, r}G.
	\]
	As \(\{\tau_n\}_{n\in \N}\) is a \(G\)\nb-invariant sequence of tracial state, \(A\) has a \(G\)\nb-invariant tracial state \(\tau\) and \(\widetilde{\tau} = \varinjlim \widetilde{\tau}_n\), where \(\widetilde{\tau}= \tau \circ E\) (see~\cite[Proposition 4.10]{Hossain-Inductive-limits-partial}). Since each \(\alpha_n\) has the Haagerup property and \((A^{(n)}, \tau_n)\) has the Haagerup property, it follows from Proposition~\ref{prop-partial-crossed-prod-Haagerup} that \(( A^{(n)}\rtimes_{\alpha^{(n)},r}G, \widetilde{\tau}_n)\) has the Haagerup property. 
	Therefore, Proposition~\ref{prop-ind-limit-Haagerup} ensures that \((A\rtimes_{\alpha, r}G, \widetilde{\tau})\) has the Haagerup property. 
	\end{proof}


\end{document}